\newif\ifPreprint \Preprinttrue
\newif\ifSubmission \Submissionfalse
\newtheorem{lemma}{Lemma}
\newtheorem{theorem}{Theorem}
\theoremstyle{definition}
\newtheorem{definition}{Definition}
\patchcmd{\@settitle}{\uppercasenonmath\@title}{\scshape\large}{}{}
\patchcmd{\@setauthors}{\MakeUppercase}{\scshape\normalsize}{}{}
\definecolor{mycol1}{RGB}{55,126,184}%blue
\definecolor{mycol2}{RGB}{228,26,28}%red
\definecolor{mycol3}{RGB}{77,175,74}%green
\definecolor{mycol4}{RGB}{0,0,0}%black
\definecolor{mycol5}{RGB}{255,127,0}%orange
\definecolor{mycol6}{RGB}{152,78,163}%lila
\definecolor{mycol7}{RGB}{255,255,51}%yellow
\renewcommand{\norm}[1]{\left\lVert#1\right\rVert}
\begin{document}

\title[Model and Discretization Error Adaptivity within Gas
Optimization]{Model and Discretization Error Adaptivity\\within
  Stationary Gas Transport Optimization}
\author[V. Mehrmann, M. Schmidt, J. J. Stolwijk]
{Volker Mehrmann$^{1}$,
  Martin Schmidt$^{2,3}$,
  Jeroen J.\ Stolwijk$^{1}$}
\address{$^1$Institut für Mathematik, MA 4-5, TU Berlin,
  Straße des 17. Juni 136, 10623 Berlin, Germany
  (\{mehrmann,stolwijk\}@math.tu-berlin.de);
  $^2$Friedrich-Alexander-Universität Erlangen-Nürnberg,
  Discrete Optimization,
  Cauerstr.~11,
  91058~Erlangen,
  Germany;
  $^3$Energie Campus Nürnberg,
  Fürther Str.~250,
  90429 Nürnberg,
  Germany}

\date{\today}

\begin{abstract}
  The minimization of operation costs for natural gas transport networks
is studied.
Based on a recently developed model hierarchy ranging from detailed
models of instationary partial differential equations with
temperature dependence to highly simplified algebraic equations,
modeling and discretization error estimates are presented to control the
overall error in an optimization method for stationary and isothermal
gas flows.
The error control is realized by switching to more detailed models or
finer discretizations if necessary to guarantee that a prescribed
model and discretization error tolerance is satisfied in the end.
We prove convergence of the adaptively controlled optimization method
and illustrate the new approach with numerical examples.

%%% Local Variables:
%%% mode: latex
%%% TeX-master: "nlp-model-coupling_preprint"
%%% End:

\end{abstract}

\keywords{Gas transport optimization,
Isothermal stationary Euler equations,
Model hierarchy,
%Model error,
%Discretization error,
Adaptive error control,
Marking strategy%
%
%
%%% Local Variables:
%%% mode: latex
%%% TeX-master: "nlp-model-coupling_preprint"
%%% End:
}
\subjclass[2010]{35Q31, % Partial differential equations; Euler equations
65G99, % Numerical analysis; None of the above, but in this section
65L70, % Numerical analysis; Error bounds
90C30, % Operations research, mathematical programming; Nonlinear programming
93C40% Systems theory; control; Adaptive control
%
%
%%% Local Variables:
%%% mode: plain-tex
%%% TeX-master: "nlp-model-coupling_preprint"
%%% End:}

\maketitle

\begin{center}
  \emph{Dedicated to Hans Georg Bock on the occasion of his 70th
    birthday.}
  \vspace*{1.5em}
\end{center}

\section{Introduction}
\label{sec:introduction}

In this paper we discuss the minimization of operation costs for
natural gas transport networks based on  a model
hierarchy, see \cite{DomHLT17,Hante_et_al:2017},
%, for the simulation and optimization of gas
%transport networks has been introduced,
which ranges from detailed models based
on instationary partial differential equations with temperature
dependence to highly simplified algebraic equations.
The detailed models are necessary to achieve a good understanding of
the system state, but in many practical optimization applications
only the stationary algebraic equations---or even further
simplifications like piecewise linearizations as in
\cite{MR3587635,MR3156585,MR3629160}---are used in order to reduce
the high computational effort of evaluating the state of the system with
the more sophisticated models.
However, it is then unclear how good the true state is approximated by
these simplified models and error bounds are typically not available
in this context; see the chapter \cite{Koch_et_al:ch11:2015} in
 \cite{Koch_et_al:2015} for a more detailed
discussion of this issue.
Recently, in \cite{StoM17}, a detailed error and perturbation
analysis has been developed for several components in the model
hierarchy and it has been shown how the more detailed model components
can be used to estimate the error obtained in the simplified models.

Here, we use these error estimates from the model hierarchy
together with classical error estimate grid adaptation techniques for
the space discretization within an optimization method to control the
error adaptively by switching to more detailed models or finer
discretizations if necessary.
Moreover, our adaptive method also allows to locally switch back to
coarser models or to coarser discretizations if they are sufficiently
accurate with respect to the local flow situation.
Our new approach can, in general, be used for the entire model
hierarchy by also using space-time grid adaptation.
However, to keep things simple and to illustrate the functionality of
the new adaptive approach, we will use three stationary
isothermal models from the hierarchy in~\cite{DomHLT17}.

Using adaptive techniques to achieve a trade-off between computational
efficiency and accuracy by using adaptive discretization methods in
the context of optimization and optimal control problems is an
important research topic, in particular in the context of real-time
optimal control of constrained dynamical systems, see,
\eg \cite{Bock_et_al:2007,Diehl_et_al:2003,Diehl_et_al:2005,Nagy_et_al:2002},
or in the context of optimal control of problems constrained by
partial differential equations; see,
\eg
\cite{Becker_et_al:2000,Kroener_et_al:2011,Leykekhman_et_al:2016,Liu_et_al:2015}.
We extend these ideas and combine adaptive grid refinement and model
selection in a model hierarchy in the context of nonlinear
optimization problems. We also theoretically analyze the new
algorithm. First promising numerical results for such an approach were
presented in \cite{Schmidt_et_al:2015a,Schmidt_et_al:2016}.

The paper is structured as follows.
The models used in this paper are described in
Sect.~\ref{sec:problem-description} together with a simple first-order
Euler method for the space discretization.
In Sect.~\ref{sec:error-estimators} we introduce model and
discretization error estimators, which are used in
Sect.~\ref{sec:algorithm} to derive an adaptive model and
discretization control algorithm for the nonlinear optimization of gas
transport networks that, in the end, delivers solutions that satisfy
prescribed error tolerances.
Numerical results are presented in Sect.~\ref{sec:comp-results} and the
paper concludes in Sect.~\ref{sec:conclusion}.

%%% Local Variables:
%%% mode: latex
%%% TeX-master: "nlp-model-coupling_preprint"
%%% End:

\section{Problem Description, Modeling Hierarchy, and Discretizations}
\label{sec:problem-description}

In this section we introduce the problem of operation cost
minimization for natural gas transport networks.
We present our overall model of a gas transport network
involving continuous \nonlinear models describing a stationary flow
for all the considered network elements.
Since the majority of the elements are pipes,
our focus lies on the precise and physically accurate modeling of
these pipes.
The typical models for the pipe flow are \nonlinear instationary partial
differential equations (PDEs) on a graph and their appropriate space-time
discretizations.
To address the fact that the behavior of the flow and the accuracy of
the model may vary significantly in different regions of the network,
we discuss a small part of the complete model hierarchy of instationary
models, see~\cite{DomHLT17}, where the lower level models in the
hierarchy are simplifications of the higher level models.
Which model is most appropriate to obtain a computationally tractable,
adequately accurate, and finite-dimensional approximation depends on the
task that needs to be performed with the model.

Our modeling approach is based on the following physical assumptions.
First, we only consider a stationary gas flow, \ie
we neglect all time effects of gas dynamics, so that we have
ordinary differential equations (ODEs) in space instead of systems of
PDEs on a graph.
Second, we assume an isothermal regime, \ie we neglect all effects
arising from changes in the gas temperature.

These assumptions are taken carefully such that we still obtain
physically meaningful solutions and such that we are still able to
derive and analyze an adaptive model and discretization control
algorithm---without unnecessarily overloading the models with all
technical details of the application that may distract us from the main
mathematical ideas.

%%%%%%%%%%%%%%%%%%%%%%%%%%%%%%%%%%%%%%%%%%%%%%%%%%%%%%%%%%%%%%%%%%%%%%%%%%%
\subsection{The Network}
\label{sec:problem-description:network}

We model the gas transport network by a directed and connected
graph~$\graph = (\nodes, \arcs)$.
The node set is made up of entry nodes~$\entries$, where gas is
supplied, of exit nodes~$\exits$, where gas is discharged, and of
inner nodes~$\innodes$,
\ie we have $\nodes = \entries \cup \exits \cup \innodes$.
The set of arcs in our models comprises pipes~$\arcsPipes$ and compressor
machines~$\arcsCM$, \ie we have $\arcs = \arcsPipes \cup \arcsCM$.

Real-world gas transport networks contain
many other element types like (control) valves, short cuts, or
resistors. For detailed information on modeling these devices, \cf
\cite{Koch_et_al:ch02:2015} in general or
\cite{Schmidt_et_al:2015a,Schmidt_et_al:2016} for a focus on
\nonlinear programming (NLP) type models.
However, we restrict ourselves to models with pipes and compressors in order to
streamline the presentation of our basic ideas and methods, and to
show in a  prototypical way  that our approach of space discretization
and model adaptivity leads to major accuracy and efficiency
improvements.

As basic quantities we introduce gas pressure
variables~$\pressureAtNode$ at all nodes~$\node \in \nodes$ and mass
flow variables~$\mFlowAtArc$ at all arcs~$\arc \in \arcs$ of the
network.
Both types of variables are bounded due to technical constraints on
the pipes, \ie
\begin{subequations}
  \label{eq:variable-bounds}
  \begin{align}
    \pressureAtNode & \in [\lbPressAtNode, \ubPressAtNode]
                      \qConstraintFor{\node \in \nodes},
    \\
    \mFlowAtArc & \in [\lbMassflowAtArc, \ubMassflowAtArc]
                  \qConstraintFor{\arc \in \arcs}.
  \end{align}
\end{subequations}
All other required quantities are introduced where they are used
first.

%%%%%%%%%%%%%%%%%%%%%%%%%%%%%%%%%%%%%%%%%%%%%%%%%%%%%%%%%%%%%%%%%%%%%%%%%%%
\subsection{Nodes}
\label{sec:problem-description:nodes}

In stationary gas network models, the nodes~$\node \in \nodes$ are
modeled by a mass balance equation, \ie we have the constraint
\begin{equation}
  \label{eq:node-model:mass-balance}
  \sum_{\arc \in \inArcs{\node}} \mFlowAtArc
  - \sum_{\arc \in \outArcs{\node}} \mFlowAtArc
  = \mFlowAtNode
  \qConstraintFor{\node \in \nodes},
\end{equation}
where for ingoing arcs we use the notation
\begin{equation*}
  \inArcs{\node} \define \condset{\arc \in \arcs}{\mbox{\rm there
      exists}\, \otherNode \in \nodes \text{ and } \arc = (\otherNode,
    \node)}
\end{equation*}
and for outgoing arcs
\begin{equation*}
  \outArcs{\node} \define \condset{\arc \in \arcs}{\mbox{\rm there
      exists}\, \otherNode \in \nodes \text{ and } \arc = (\node,
    \otherNode)}.
\end{equation*}
Moreover, $\mFlowAtNode$ models the supplied or discharged mass flow at
the corresponding node, \ie we have
\begin{equation*}
  \mFlowAtNode
  \begin{cases}
    \geq 0 & \text{for all } \node \in \exits,
    \\
    \leq 0 & \text{for all } \node \in \entries,
    \\
    = 0 & \text{for all } \node \in \innodes.
  \end{cases}
\end{equation*}

%%%%%%%%%%%%%%%%%%%%%%%%%%%%%%%%%%%%%%%%%%%%%%%%%%%%%%%%%%%%%%%%%%%%%%%%%%%
\subsection{Pipes}
\label{sec:problem-description:pipes}

Isothermal gas flow through cylindrical pipes is described by the
Euler equations for compressible fluids,
\begin{subequations}
  \label{eq:euler-iso1-p-v-version}
  \begin{align}
    \frac{\partial\density}{\partial t} +
    \frac{1}{\area}\frac{\partial\mFlow}{\partial \spatVar}
    &= \num{0},
      \label{eq:euler-iso1-p-v-version-cont}\\
    \frac{1}{\area}\frac{\partial\mFlow}{\partial t} +
    \frac{\partial \pressure}{\partial \spatVar} +
    \frac{1}{\area}\frac{\partial(\mFlow \velocity)}{\partial \spatVar}
    &= - \lambda(\mFlow)\frac{\abs{\velocity}\velocity}{2\diameter}\density
      -\gravity\density \slope,
      \label{eq:euler-iso1-p-v-version-mom}
  \end{align}
\end{subequations}
\cf, \eg \cite{Feistauer:1993,Lurie:2008} for a detailed discussion.
Here and in what follows, $\density$ is the gas density,
$\velocity$ is its velocity, $\lambda = \lambda(\mFlow)$
is the friction term, $\area$ denotes the cross-sectional area of the
pipe, $\slope$ is its slope, and $\diameter$ is the diameter of the
pipe.
Furthermore, $\gravity$ is the acceleration due to gravity, $t$ is the temporal
coordinate, and $\spatVar \in [0, \length]$ is the spatial coordinate with
$\length$ being the length of the pipe.
Equation~\eqref{eq:euler-iso1-p-v-version-cont} is called the
continuity equation and \eqref{eq:euler-iso1-p-v-version-mom}
the momentum equation.
Since we only consider the stationary case, all partial derivatives
\wrt time vanish and we obtain the simplified stationary model
\begin{subequations}
  \label{eq:stationary-euler-iso1-p-v-version}
  \begin{align}
    \frac{1}{\area}\frac{\partial\mFlow}{\partial \spatVar}
    &= \num{0},
      \label{eq:stationary-euler-iso1-p-v-version-cont}\\
    \frac{\partial \pressure}{\partial \spatVar} +
    \frac{1}{\area}\frac{\partial(\mFlow \velocity)}{\partial \spatVar}
    &= -\lambda(\mFlow)
      \frac{\abs{\velocity}\velocity}{2\diameter}\density
      -\gravity\density \slope.
      \label{eq:stationary-euler-iso1-p-v-version-mom}
  \end{align}
\end{subequations}
Thus, the continuity equation in its stationary variant simply states
that the mass flow along the pipe is constant, \ie $\mFlow(\spatVar) \equiv
\mFlow = \const$ for all $\spatVar \in [0, \length]$.

To simplify the stationary momentum
equation~\eqref{eq:stationary-euler-iso1-p-v-version-mom}, we consider
two more model equations. First, the equation of state
\begin{equation*}
  %\label{eq:equation-of-state}
  \press = \density \speedOfSound^2
  \quad \text{with} \quad
  \speedOfSound = \sqrt{\specificGasConstant \temperature \realGasFactor},
\end{equation*}
where $\speedOfSound$ is the speed of sound, $\specificGasConstant$ is
the specific gas constant, and $\realGasFactor$ is the compressibility
factor. The second model is the relation of gas mass flow, density, and velocity
given by
\begin{equation*}
  %\label{eq:mflow-to-velocity-conversion}
  \mFlow = \area \density \velocity.
\end{equation*}
Substituting both these models
into~\eqref{eq:stationary-euler-iso1-p-v-version-mom}, we obtain
\begin{equation}
  \tag{M$_1$}
  \label{eq:stationary-momentum-equ}
  \frac{\partial \pressure}{\partial \spatVar}
  \left(1 - \frac{\mFlow^2}{\area^2}\frac{\speedOfSound^2}{\pressure^2} \right)
  = -\frac{\lambda \speedOfSound^2}{2 \area^2 \diameter \pressure}
  \abs{\mFlow}\mFlow
  -\frac{\gravity \slope}{\speedOfSound^2} \pressure,
\end{equation}
\ie the stationary momentum equation written in dependence of
the gas pressure $\pressure = \pressure(x)$, $\spatVar \in [0,
\length]$, and the mass flow~$\mFlow$.

A simplified version of the latter equation can be obtained by
ignoring the ram pressure term
\begin{equation*}
  \frac{1}{\area}\frac{\partial(\mFlow \velocity)}{\partial \spatVar},
\end{equation*}
in \eqref{eq:stationary-euler-iso1-p-v-version-mom}, \ie the total
pressure exerted on the gas by the pipe wall, or, equivalently,
the term
\begin{equation}\label{ram_pressure}
  -\frac{\mFlow^2}{\area^2}\frac{\speedOfSound^2}{\pressure^2}
  \frac{\partial \pressure}{\partial \spatVar}
\end{equation}
in \eqref{eq:stationary-momentum-equ}.
For a discussion of this simplification step,
see~\cite{WilkinsonEtAl:1964}.
Neglecting the ram pressure term~\eqref{ram_pressure} yields
\begin{equation}
  \tag{M$_2$}
  \label{eq:stationary-momentum-equ-wo-ram-pressure}
  \frac{\partial \pressure}{\partial \spatVar}
  = -\frac{\lambda \speedOfSound^2}{2 \area^2 \diameter \pressure}
  \abs{\mFlow}\mFlow
  -
  \frac{\gravity \slope}{\speedOfSound^2} \pressure.
\end{equation}
Finally, one may also neglect gravitational forces, \ie set the term
$\gravity \slope \pressure / \speedOfSound^2$ to~$0$ and obtain
\begin{equation}
  \tag{M$_3$}
  \label{eq:stationary-momentum-equ-wo-ram-pressure-and-heights}
  \frac{\partial \pressure}{\partial \spatVar}
  = -\frac{\lambda \speedOfSound^2}{2 \area^2 \diameter \pressure}
  \abs{\mFlow}\mFlow.
\end{equation}

Analytical solutions for the models~\eqref{eq:stationary-momentum-equ}--%
\eqref{eq:stationary-momentum-equ-wo-ram-pressure-and-heights} are
only rarely known; \cf, \eg
\cite{Gugat_et_al:2015,Gugat_et_al:2016,Schmidt_et_al:2015a}.
Thus, in order to obtain finite-dimensional nonlinear optimization
models, we discretize these differential equations in space.
Applying, \eg the implicit Euler method we obtain
\begin{align}
  \tag{D$_1$}
  \label{eq:stationary-momentum-equ-discretized}
  \frac{\pressure_k - \pressure_{k-1}}{h}
  \left(1 - \frac{\mFlow^2}{\area^2}\frac{\speedOfSound^2}{\pressure_k^2} \right)
  & = -
    \frac{\lambda \speedOfSound^2}{2 \area^2 \diameter \pressure_k} \abs{\mFlow}\mFlow
    -
    \frac{\gravity \slope}{\speedOfSound^2} \pressure_k,
  & k = 1, \dotsc, n,\\
  \tag{D$_2$}
  \label{eq:stationary-momentum-equ-wo-ram-pressure-discretized}
  \frac{\pressure_k - \pressure_{k-1}}{h}
  & = -
    \frac{\lambda \speedOfSound^2}{2 \area^2 \diameter \pressure_k} \abs{\mFlow}\mFlow
    -
    \frac{\gravity \slope}{\speedOfSound^2} \pressure_k,
  & k = 1, \dotsc, n,\\
  \tag{D$_3$}
  \label{eq:stationary-momentum-equ-wo-ram-pressure-and-heights-discretized}
  \frac{\pressure_k - \pressure_{k-1}}{h}
  & = -
    \frac{\lambda \speedOfSound^2}{2 \area^2 \diameter \pressure_k}
    \abs{\mFlow}\mFlow,
  & k = 1, \dotsc, n,
\end{align}
where $p_k = p(\spatVar_k)$ and $\Gamma = \set{\spatVar_0, \spatVar_1,
  \dotsc, \spatVar_n}$ is an equidistant spatial discretization of the
pipe with constant stepsize $h = \spatVar_k - \spatVar_{k-1}$ and $x_0
= 0, x_n = L$.
Of course, one could also apply a higher-order Runge--Kutta method,
which would allow a larger stepsize and would thus reduce the
computational cost.

These discretizations extend the model
hierarchy \eqref{eq:stationary-momentum-equ}--%
\eqref{eq:stationary-momentum-equ-wo-ram-pressure-and-heights}
for the Euler equations by infinitely many models that are
parameterized by the discretization stepsize~$h$ applied in
\eqref{eq:stationary-momentum-equ-discretized}--%
\eqref{eq:stationary-momentum-equ-wo-ram-pressure-and-heights-discretized}.
In summary, we obtain the pipe model hierarchy of stationary
Euler equations depicted in Fig.~\ref{fig:ModelHierarchy}.
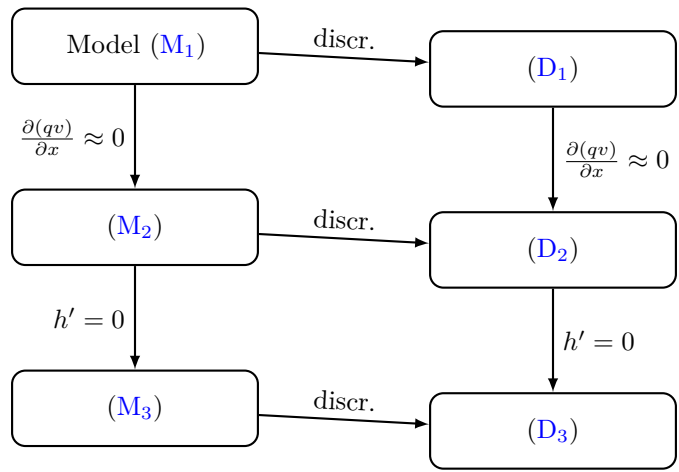
\begin{figure}
  \centering
  \begin{tikzpicture}
  [model/.style = {rectangle, draw, thick, rounded corners=5pt,minimum height=1cm,text centered,draw=black,text width=3cm},
  edge from parent/.style={draw,-latex,thick},
  level distance=2.4cm]
  %

  %% Nodes
  \node (M1) [model] at (0,0) {Model \eqref{eq:stationary-momentum-equ}}
  child {node[model] (M2) {\eqref{eq:stationary-momentum-equ-wo-ram-pressure}}
    child {node[model] (M3) {\eqref{eq:stationary-momentum-equ-wo-ram-pressure-and-heights}}
      edge from parent node[left] {$\slope=0$}}
    edge from parent node[left] {$\frac{\partial(\mFlow
        \velocity)}{\partial \spatVar} \approx 0$}
  }
  ;

  \node (D1) [model] at (5.5,-0.3) {\eqref{eq:stationary-momentum-equ-discretized}}
  child {node[model] (D2) {\eqref{eq:stationary-momentum-equ-wo-ram-pressure-discretized}}
    child {node[model] (D3) {\eqref{eq:stationary-momentum-equ-wo-ram-pressure-and-heights-discretized}}
      edge from parent node[right] {$\slope=0$}}
    edge from parent node[right] {$\frac{\partial(\mFlow
        \velocity)}{\partial \spatVar} \approx 0$}
  }
  ;

  %% Arcs
  \path[draw,-latex,thick] (M1) -> node[above,sloped] {discr.} (D1);
  \path[draw,-latex,thick] (M2) -> node[above,sloped] {discr.} (D2);
  \path[draw,-latex,thick] (M3) -> node[above,sloped] {discr.} (D3);
\end{tikzpicture}

%%% Local Variables:
%%% mode: latex
%%% TeX-master: "../nlp-model-coupling_preprint"
%%% End:
  \caption{Pipe model hierarchy based on the Euler equations.
    The space continuous models are positioned in the left column and
    their space discretized counterparts are positioned in the right
    column.}
  \label{fig:ModelHierarchy}
\end{figure}

%%%%%%%%%%%%%%%%%%%%%%%%%%%%%%%%%%%%%%%%%%%%%%%%%%%%%%%%%%%%%%%%%%%%%%%%%%%
\subsection{Compressors}
\label{sec:problem-description:compr-model}

Compressor machines~$\arc = (\node, \otherNode) \in \arcsCM$
increase the inflow gas pressure to a higher outflow pressure, \ie
they can be described in a simplified way by
\begin{equation}
  \label{eq:compr-model}
  \pressureAtOtherNode = \pressureAtNode + \pressureDiffAtCM,
  \quad
  \pressureDiffAtCM \in [0, \ubPressureDiffAtCM]
  \quad \text{ for all } \arc \in \arcsCM.
\end{equation}
Moreover, for simplicity, we assume that we are given cost
coefficients~$\omega_\arc \geq 0$ for every compressor~$\arc \in
\arcsCM$ that converts pressure increase to compression cost.
Of course, this is an extremely coarse approximation of a compressor
machine. An alternative would be to use a simple  input-output
surrogate model obtained from a realization or system identification
of an input-output transfer function; \cf, \eg~\cite{Bro15}.
However, our focus is on an accurate modeling of the gas flow in pipes
and on deriving an adaptive model and discretization control algorithm.
Model~\eqref{eq:compr-model} allows for setting up a reasonable
objective function for our NLPs and is thus appropriate in this work.
For more details, see
\cite{Rose_et_al:2016,Schmidt_et_al:2015a,Schmidt_et_al:2016} or
\cite{Koch_et_al:ch02:2015}.

%%%%%%%%%%%%%%%%%%%%%%%%%%%%%%%%%%%%%%%%%%%%%%%%%%%%%%%%%%%%%%%%%%%%%%%%%%%
\subsection{The Optimization Problem}
\label{sec:entire-model}

We will use the adaptive model and discretization control algorithm in
the context of the following \nonlinear ODE-constrained optimization
problem
\begin{subequations}
  \label{eq:entire-model-infinite-dim}
  \begin{align}
    \min \quad
    & \sum_{\arc \in \arcsCM} \omega_\arc \pressureDiffAtCM
    \\
    \st \quad
    & \text{variable bounds } \eqref{eq:variable-bounds},
      \label{eq:entire-model-infinite-dim:var-bounds}\\
    & \text{mass balance } \eqref{eq:node-model:mass-balance},
    \\
    & \text{compressor model } \eqref{eq:compr-model}
      \text{ for all } \arc \in \arcsCM,
    \label{eq:entire-model-infinite-dim:compressor} \\
    & \text{pipe model }
      \text{\eqref{eq:stationary-momentum-equ}}
      \text{ for all } \arc \in \arcsPipes,
      \label{eq:entire-model-infinite-dim-pipe}
  \end{align}
\end{subequations}
where our objective function models the cost for the compressor activity
that is constrained by an infinite-dimensional description of the gas
flow in pipes.
Problem~\eqref{eq:entire-model-infinite-dim}
is a classical nonlinear optimal control problem.
A typical approach to solve such problems in practice is the
first-discretize-then-optimize paradigm; \cf, \eg~\cite{Bie10}.
In this setting, one replaces the ODE constraints by finite sets of
\nonlinear constraints that arise, \eg from implicit Euler
discretizations like \eqref{eq:stationary-momentum-equ-discretized}
for \eqref{eq:stationary-momentum-equ}.
Moreover, practical experience suggests that for the evaluation of the
constraints, it is often not required to apply the most accurate model like
\eqref{eq:stationary-momentum-equ-discretized} with a small stepsize
for every pipe in the network.
Instead, in many situations it is sufficient  to use simplified models like
\eqref{eq:stationary-momentum-equ-wo-ram-pressure-discretized} and
\eqref{eq:stationary-momentum-equ-wo-ram-pressure-and-heights-discretized}
with a coarse grid,
which then typically yields fast execution times for the evaluation of
the constraint functions.

To this end, we define discretized problem variants of
Problem~\eqref{eq:entire-model-infinite-dim} by specifying the
model level~$\ell_\arc \in \set{1, 2, 3}$
for every arc~$\arc \in \arcsPipes$ (\ie the discretized model
\eqref{eq:stationary-momentum-equ-discretized},
\eqref{eq:stationary-momentum-equ-wo-ram-pressure-discretized}, or
\eqref{eq:stationary-momentum-equ-wo-ram-pressure-and-heights-discretized},
respectively)
together with a stepsize~$h_\arc$.
This yields the family of finite-dimensional NLPs
\begin{subequations}
  \label{eq:entire-model-discr}
  \begin{align}
    \min \quad
    & \sum_{\arc \in \arcsCM} \omega_\arc \pressureDiffAtCM
    \\
    \st \quad
    & \text{variable bounds } \eqref{eq:variable-bounds},
      \label{eq:entire-model-discr:var-bounds}\\
    & \text{mass balance } \eqref{eq:node-model:mass-balance},
    \\
    & \text{compressor model } \eqref{eq:compr-model}
      \text{ for all } \arc \in \arcsCM,
      \label{eq:entire-model-discr:compressor} \\
    & \text{pipe model (\Dla) with stepsize } h_\arc
      \text{ for all } \arc \in \arcsPipes.
      \label{eq:entire-model-discr:pipe}
  \end{align}
\end{subequations}
Note that the
constraints~\eqref{eq:entire-model-infinite-dim:var-bounds}--%
\eqref{eq:entire-model-infinite-dim:compressor} in the infinite-dimensional
problem are exactly the same as
constraints~\eqref{eq:entire-model-discr:var-bounds}--%
\eqref{eq:entire-model-discr:compressor} in the family of discretized problems.

%%% Local Variables:
%%% mode: latex
%%% TeX-master: "nlp-model-coupling_preprint"
%%% End:

\section{Error Estimators}
\label{sec:error-estimators}

In this section we introduce a \firstorder
estimate for the error between the most detailed infinite-dimensional
and an arbitrary space-discretized model.
This error estimator is obtained as the sum of a
discretization and a model error estimator.
Since we consider the stationary case, mass flows in pipes are
constant in the spatial dimension.
This is why we base our error estimators on the differences of the
pressures $\pressure(\spatVar)$ for different models and
discretizations.

Suppose that for a given pipe $\arc \in \arcsPipes$,
the model level~$\la \in \set{1,2,3}$ with discretization
stepsize~$h_\arc$ is currently used for the computations.
The overall solution of the optimization problem for the
entire network, also including pressure
increases in compressors \etc, is denoted by~$\sol$
and contains the discretized pressure distributions of
the separate pipes~$\arc \in \arcsPipes$, which we denote
by $\pressure^{\la}(\spatVar_k;h_\arc)$ with
discretization grid $\Gamma_1 = \set{\spatVar_k}_{k=0}^{L_\arc/h_\arc}$
obtained by using the stepsize~$h_\arc$.
We now compute an estimate for the error between the solution of the
currently used model~$(\Dla)$ and the solution of the reference
model~\eqref{eq:stationary-momentum-equ}.
Let the solution of model~\eqref{eq:stationary-momentum-equ} for pipe
$\arc \in \arcsPipes$ be denoted by $\hat{\pressure}(\spatVar)$
with $\spatVar \in [0,L_\arc]$.

\begin{figure}
  \centering
  \begin{tikzpicture}
  %

  % Vertical lines
  \draw[thick,color=black!50] (0,2.5) node{} -- (0,-0.5) node{};
  \draw[thick,color=black!50] (4,2.5) node{} -- (4,-0.5) node{};
  \draw[thick,color=black!50] (8,2.5) node{} -- (8,-0.5) node{};

  % Upper row of cicles
  \fill (0,2) circle[radius=2pt];
  \fill (1,2) circle[radius=2pt];
  \fill (2,2) circle[radius=2pt];
  \fill (3,2) circle[radius=2pt];
  \fill (4,2) circle[radius=2pt];
  \fill (5,2) circle[radius=2pt];
  \fill (6,2) circle[radius=2pt];
  \fill (7,2) circle[radius=2pt];
  \fill (8,2) circle[radius=2pt];

  \node at (9.5,1.97) {$\Gamma_1 = \set{\spatVar_k}_{k=0}^{L_\arc/h_\arc}$};

  % Middle row of circles
  \fill (0,1) circle[radius=2pt];
  \fill (2,1) circle[radius=2pt];
  \fill (4,1) circle[radius=2pt];
  \fill (6,1) circle[radius=2pt];
  \fill (8,1) circle[radius=2pt];

  \node at (9.66,0.97) {$\Gamma_2 = \set{\spatVar_s}_{s=0}^{L_\arc/(2h_\arc)}$};

  % Bottom row of circles
  \fill (0,0) circle[radius=2pt];
  \fill (4,0) circle[radius=2pt];
  \fill (8,0) circle[radius=2pt];

  \node at (9.66,-0.03) {$\Gamma_3 = \set{\spatVar_r}_{r=0}^{L_\arc/(4h_\arc)}$};

  % Uper underbrace
  \node (a) at (-0.1,2) {};
  \node (b) at (1.1,2) {};

  \draw [
    thick,
    decoration={
        brace,
        mirror,
        raise=0.1cm
    },
    decorate
  ] (a) -- (b)
  node [pos=0.5,anchor=north,yshift=-0.15cm] {$h_\arc$};

  % Middle underbrace
  \node (c) at (-0.1,1) {};
  \node (d) at (2.1,1) {};

  \draw [
    thick,
    decoration={
        brace,
        mirror,
        raise=0.1cm
    },
    decorate
  ] (c) -- (d)
  node [pos=0.5,anchor=north,yshift=-0.15cm] {$2h_\arc$};

  % Bottom underbrace
  \node (e) at (-0.1,0) {};
  \node (f) at (4.1,0) {};

  \draw [
    thick,
    decoration={
        brace,
        mirror,
        raise=0.1cm
    },
    decorate
  ] (e) -- (f)
  node [pos=0.5,anchor=north,yshift=-0.15cm] {$4h_\arc$};

\end{tikzpicture}

%%% Local Variables:
%%% mode: latex
%%% TeX-master: "../nlp-model-coupling_preprint"
%%% End:
  \caption{Overview of the three considered discretization
    grids~$\Gamma_1$, $\Gamma_2$, and~$\Gamma_3$ with
    gridpoints~$\spatVar_k$, $\spatVar_s$, and $\spatVar_r$
    and stepsizes~$h_\arc$, $2h_\arc$, and $4h_\arc$, respectively.
    The vertical lines represent the evaluation grid~$\Gamma_3$
    for the error estimators in~\eqref{eq:discretizationErrorEstimator}
    and~\eqref{eq:modelErrorEstimator}.}
  \label{fig:evaluationGrid}
\end{figure}
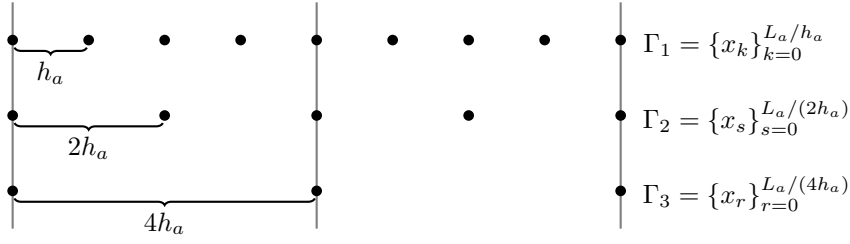

Furthermore, let the solutions of
Model~\eqref{eq:stationary-momentum-equ-discretized} with
discretization grids $\Gamma_2 = \set{\spatVar_s}_{s=0}^{L_\arc/(2h_\arc)}$
and $\Gamma_3 = \set{\spatVar_r}_{r=0}^{L_\arc/(4h_\arc)}$
using stepsizes~$2h_\arc$ and~$4h_\arc$,  be denoted by
$\pressure^1(\spatVar_s;2h_\arc)$ and $\pressure^1(\spatVar_r;4h_\arc)$,
respectively.
Due to the larger stepsize, the computation of
these two solutions is in general less expensive than computing a
solution of Model~$(\Dla)$ on the grid~$\Gamma_1$.
Since the discretization grid $\Gamma_3$ is the coarsest grid and
all computed pressure profiles can be evaluated on this grid,
$\Gamma_3$ is called the evaluation grid.
This grid is used in the definitions of the following error
estimators.
The considered discretization grids and the evaluation grid
are depicted in Fig.~\ref{fig:evaluationGrid}.

For a pipe~$\arc \in \arcsPipes$, let the %relative
discretization error
estimator be defined by
\begin{equation}\label{eq:discretizationErrorEstimator}
  \eta_{\td,\arc}(\sol) \define
  \norm{%\diag\left(\pressure^{\la}(\spatVar_r;h_\arc)\right)^{-1}
    %\left(
    \pressure^1(\spatVar_r;2h_\arc) -
      \pressure^1(\spatVar_r;4h_\arc)
      %\right)
      }_\infty
\end{equation}
and let the %relative
model error estimator be defined by
\begin{equation}\label{eq:modelErrorEstimator}
  \eta_{\tm,\arc}(\sol) \define
  \norm{%\diag\left(\pressure^{\la}(\spatVar_r;h_\arc)\right)^{-1}
    %\left(
    \pressure^1(\spatVar_r;2h_\arc) -
      \pressure^{\la}(\spatVar_r;h_\arc)
      %\right)
      }_\infty.
\end{equation}
Here,
\begin{equation*}
  \pressure^{\la}(\spatVar_r;h_\arc) =
  [\pressure^{\la}(\spatVar_0;h_\arc), \dotsc,
  \pressure^{\la}(\spatVar_n;h_\arc)]^\top,
  \quad
  n = L_\arc/(4h_\arc),
\end{equation*}
denotes the solution of Model~$(\Dla)$ computed with stepsize~$h_\arc$
that is evaluated at the gridpoints~$\spatVar_r$, \ie on the grid~$\Gamma_3$.
If $\la = 1$, \ie if the considered solution already corresponds to the
most accurate model, then we set the
model error to zero, \ie $\eta_{\tm,\arc}(\sol) = 0$.
Furthermore, let the overall error estimator~$\eta_\arc(\sol)$ for a pipe~$\arc
\in \arcsPipes$ be defined to be a \firstorder upper bound for the
maximum error between the solutions of
models~\eqref{eq:stationary-momentum-equ} and~$(\Dla)$ at
gridpoints~$\spatVar_r$ with stepsize~$4h_\arc$.
Thus, we have
\begin{equation}
  \label{eq:firstOrderApprox}
  \begin{split}
    & \norm{%\diag\left(\pressure^{\la}(\spatVar_r;h_\arc)\right)^{-1}
      %\left(
      \hat{\pressure}(\spatVar_r) -
        \pressure^{\la}(\spatVar_r;h_\arc)
        %\right)
        }_\infty
    \\
    \leq \
    & \norm{%\diag\left(\pressure^{\la}(\spatVar_r;h_\arc)\right)^{-1}
      %\left(
      \hat{\pressure}(\spatVar_r) -
        \pressure^1(\spatVar_r;2h_\arc)
        %\right)
        }_\infty
      + \norm{%\diag\left(\pressure^{\la}(\spatVar_r;h_\arc)\right)^{-1}
      %\left(
      \pressure^1(\spatVar_r;2h_\arc) -
        \pressure^{\la}(\spatVar_r;h_\arc)
        %\right)
        }_\infty
    \\
    \doteq \
    & \eta_{\td,\arc}(\sol) + \eta_{\tm,\arc}(\sol)
    \enifed \eta_\arc(\sol),
  \end{split}
\end{equation}
where $\doteq$ denotes a \firstorder approximation in~$h_\arc$,
%This \firstorder approximation is derived in
see \cite[page~420]{StoB80}, and we use that the implicit
Euler method has convergence order~1.
The error estimator~$\eta_\arc(\sol)$ is the absolute counterpart
of the componentwise relative error estimator given in
\cite{StoM17}.
An overview of the considered models in this section together with
the considered stepsizes is depicted in Fig.~\ref{fig:ModelOverview}.

We close this section with a remark on the computation of the
discretization error estimator
in~\eqref{eq:discretizationErrorEstimator}.
A straightforward way is to solve
Model~\eqref{eq:stationary-momentum-equ-discretized} once with
stepsize~$2h_\arc$ and once again with stepsize~$4h_\arc$ for every
$\arc \in \arcsPipes$.
Another possibility would be to use an embedded Runge--Kutta method,
see, \eg \cite{HaiNW93}, which in general saves computational cost
due to the reduced number of function evaluations.

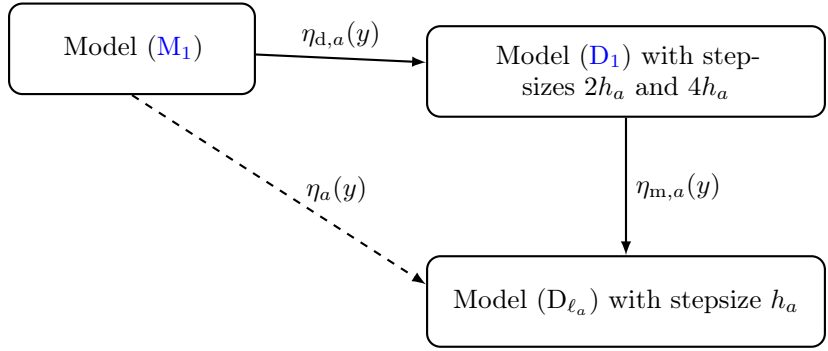
\begin{figure}
  \centering
  \begin{tikzpicture}
  \tikzstyle{model_D} = [%
  rectangle,
  rounded corners=5pt,
  minimum width=2cm,
  minimum height=1.2cm,
  text centered,
  draw=black,
  thick,
  text width=5cm]
  \tikzstyle{model_M} = [%
  rectangle,
  rounded corners=5pt,
  minimum width=2cm,
  minimum height=1.2cm,
  text centered,
  draw=black,
  thick,
  text width=3cm]
  %

  %% Nodes
  \node (M1) [model_M] at (-6.5,2.55) {Model~\eqref{eq:stationary-momentum-equ}};

  \node (D1) [model_D] at (0,2.25)
  {Model~\eqref{eq:stationary-momentum-equ-discretized} with
    stepsizes~$2h_\arc$ and~$4h_\arc$
    % and gridpoints~$x_r$%, %$r = 0, 1, \dotsc, n_m$
  };

  \node (Dl) [model_D] at (0,-0.8) {Model~$(\Dla)$ with
    stepsize~$h_\arc$
    % and gridpoints~$x_k$%, $k = 0, 1, \dotsc, n_\ell$
    };

  %% Arcs
  \path[draw,-latex,thick] (M1) -> node[above] {$\eta_{\td,\arc}(\sol)$} (D1);
  \path[draw,-latex,thick] (D1) -> node[right] {$\eta_{\tm,\arc}(\sol)$} (Dl);
  \path[draw,-latex,thick,dashed] (M1.south) -> node[right,xshift=1.5ex] {$\eta_\arc(\sol)$} ([yshift=1.5ex]Dl.west);
\end{tikzpicture}

%%% Local Variables:
%%% mode: latex
%%% TeX-master: "../nlp-model-coupling_preprint"
%%% End:
  \caption{Overview of the models and stepsizes used for
    the computation of the overall error estimator~$\eta_\arc(\sol)$ between
    models~\eqref{eq:stationary-momentum-equ} and~$(\Dla)$
    in~\eqref{eq:firstOrderApprox}.
    Here, for a pipe~$\arc$, $\eta_{\td,\arc}(\sol)$ is the
    discretization error estimator and $\eta_{\tm,\arc}(\sol)$ is the
    model error estimator.}
  \label{fig:ModelOverview}
\end{figure}

%%% Local Variables:
%%% mode: latex
%%% TeX-master: "nlp-model-coupling_preprint"
%%% End:

\section{The Grid and Model Adaptation Algorithm}
\label{sec:algorithm}

In this section we present and analyze an algorithm that adaptively switches
between the model levels in the hierarchy of Fig.~\ref{fig:ModelHierarchy}
and adapts discretization stepsizes in order to find a convenient
trade-off between physical accuracy and computational costs.
To this end, the algorithm iteratively solves NLPs and initial value
problems (IVPs).
Solutions of the latter are used to evaluate the error estimators
discussed in the last section and to decide on the model levels and
the discretization stepsizes for the next NLP.

Consider a single NLP of the sequence of NLPs that are solved during
the algorithm and assume that pipe~$\arc \in \arcsPipes$ is modeled using
model~$(\Dla)$ and stepsize~$h_\arc$.
Let the solution of this NLP be denoted by $\sol$.
According to the last section, the overall %relative
model and
discretization error estimator for this pipe is
given by $\eta_\arc(\sol)$ as defined in~\eqref{eq:firstOrderApprox}.
Thus, it is given by the %relative
error estimator between the solutions
of the most accurate model~\eqref{eq:stationary-momentum-equ} and the
current model~$(\Dla)$.

The overall goal of our method is to compute a solution of a
member of the family of discretized
problems~\eqref{eq:entire-model-discr} for which it is guaranteed that
this solution has an estimated average error per pipe \wrt\ the
reference model~\eqref{eq:stationary-momentum-equ}, that is less than
an a-priorily given tolerance~$\eps > 0$.
This leads us to the following definition:
\begin{definition}[$\eps$-feasibility]
  \label{def:epsilon-feasibility}
  Let $\eps > 0$ be given.
  We say that a solution~$\sol$ of problem~\eqref{eq:entire-model-discr}
  with discretized models~(\Dla), $\ell_\arc \in \set{1,2,3}$, and
  stepsizes~$h_\arc$ for the pipes~$\arc \in \arcsPipes$ is
  \emph{$\eps$-feasible} \wrt\ the reference
  problem~\eqref{eq:entire-model-infinite-dim} if
  \begin{equation*}
    %\label{eq:eps-feasibility}
    \frac{1}{\card{\arcsPipes}}
    \sum_{\arc \in \arcsPipes}
    \eta_\arc(\sol)
    \leq
    \eps.
  \end{equation*}
\end{definition}
The remainder of this section is organized as follows.
Sect.~\ref{sec:model-adap-rules} introduces rules about how
the model levels and discretization stepsizes are modified.
The strategies for marking pipes for model or grid adaptation
are explained in Sect.~\ref{sec:marking-strategies}.
The adaptive model and discretization control algorithm
is introduced in Sect.~\ref{sec:algorithm-subsec},
together with a theorem for the finite termination
of the algorithm.
Finally, some remarks regarding the adaptive control algorithm
are given in Sect.~\ref{sec:remarks}.

%%%%%%%%%%%%%%%%%%%%%%%%%%%%%%%%%%%%%%%%%%%%%%%%%%%%%%%%%%%%%%%%%%%%%%%%%%%
\subsection{Model and Discretization Adaptation Rules}
\label{sec:model-adap-rules}

Before we present and discuss the overall adaptive model control
algorithm we have to
\begin{enumerate}
\item describe the mechanisms of switching up or down pipe
  model levels as well as that of refining and coarsening the discretization
  grids, and
\item discuss our marking strategy that determines the arcs on which
  the model or grid should be adapted.
\end{enumerate}
We start with the first issue and follow the standard PDE grid
adaptation technique; see, \eg \cite{CarH06a,CarH06b,Doe96} or
\cite{BreS07}.
The general strategy is as follows.
We switch up one level in the model hierarchy if this yields an error
reduction that is larger than $\eps$;
otherwise, we switch up to the most accurate discretized
model~\eqref{eq:stationary-momentum-equ-discretized}.
Hence, for pipe $\arc \in \arcsPipes$ we have the rule
\begin{equation}\label{level_switch_up_rule}
  \ell^\tnew_\arc =
  \begin{cases}
    \ell_\arc - 1, & \quad \text{if }
    \eta_{\tm,\arc}(\sol; \ell_\arc) - \eta_{\tm,\arc}(\sol;\ell_\arc - 1)
    > \eps, \\
    1, & \quad \text{otherwise},
  \end{cases}
\end{equation}
for switching up levels in the model hierarchy.
We apply this rule because it is possible that the effects of
neglecting the ram pressure term (which is the difference between
model levels $\ell = 1$ and $\ell = 2$) and neglecting gravitational
forces for non-horizontal pipes (which is the difference between
model levels $\ell = 2$ and $\ell = 3$)
balance each other out in the computation of the pressure profile
of model~\eqref{eq:stationary-momentum-equ-wo-ram-pressure-and-heights-discretized}.
In this case, switching from model~$(\tD_3)$ to~$(\tD_2)$
would increase the model error,
which is why we switch from~$(\tD_3)$ to~$(\tD_1)$ directly.

A discretization grid refinement or coarsening with a factor
$\gamma > 1$ is defined by taking the new stepsize as
\begin{equation}\label{h_new}
  h_\arc^{\text{new}} \define
  \begin{cases}
    h_\arc/\gamma, & \quad \text{for a grid refinement},\\
    \gamma h_\arc, & \quad \text{for a grid coarsening}.
  \end{cases}
\end{equation}
For a discretization scheme of order $\beta$ it is well-known that a
\firstorder approximation for the discretization
error in $x \in [0,L_\arc]$ is given by $e_{\td,\arc}(x) \doteq c(x)h_\arc^\beta$,
where $c(x)$ is independent of $h_\arc$;
\cf, \eg \cite{StoB80}.
From this, it follows that the new discretization error
after a grid refinement or coarsening can be
written as
\begin{equation*}
  e_{\td,\arc}^\tnew(x) \doteq (h_\arc^{\text{new}}/h_\arc)^\beta e_{\td,\arc}(x).
\end{equation*}
Since the implicit Euler method has convergence order~$\beta = 1$,
with $h_\arc^\tnew$ in~\eqref{h_new} and $\gamma = 2$,
for the new discretization error estimator after a grid
refinement or coarsening, it holds that
\begin{equation}\label{new_discr_error}
  \eta_{\td,\arc}^\tnew(\sol) \doteq
  \begin{cases}
    \eta_{\td,\arc}(\sol)/2,
    & \quad \text{for a grid refinement},\\
    2 \eta_{\td,\arc}(\sol),
    & \quad \text{for a grid coarsening}.
  \end{cases}
\end{equation}

%%%%%%%%%%%%%%%%%%%%%%%%%%%%%%%%%%%%%%%%%%%%%%%%%%%%%%%%%%%%%%%%%%%%%%%%%%%
\subsection{Marking Strategies}
\label{sec:marking-strategies}

We now describe our marking strategies, \ie how we choose which pipes
should be switched up or down in their model level and which pipes
should get a refined or coarsened grid.
Given marking strategy parameters~$\Theta_\td,
\Theta_\tm \in [0,1]$, we compute subsets $\refineArcs, \switchUpArcs \subseteq
\arcsPipes$ such that they are the minimal subsets of arcs that
satisfy
\begin{equation}
  \label{eq:marking-strategy-refine}
  \Theta_\td \sum_{\arc \in \arcsPipes}
  \eta_{\td, \arc}(\sol) \leq
  \sum_{\arc \in \refineArcs} \eta_{\td, \arc}(\sol)
\end{equation}
and
\begin{equation}
  \label{eq:marking-strategy-up}
  \Theta_\tm \sum_{\arc \in \arcsPipes^{> \eps}}
  (\eta_{\tm,\arc}(\sol;\ell_\arc) - \eta_{\tm,\arc}(\sol;\ell_\arc^\tnew))
  \leq
  \sum_{\arc \in \switchUpArcs}
  (\eta_{\tm,\arc}(\sol;\ell_\arc) -
  \eta_{\tm,\arc}(\sol;\ell_\arc^\tnew))
\end{equation}
with
\begin{equation*}
  \arcsPipes^{> \eps} \define
  \condset{\arc \in \arcsPipes}
  {\eta_{\tm,\arc}(\sol;\ell_\arc) - \eta_{\tm,\arc}(\sol;\ell_\arc^\tnew)
    > \eps},
\end{equation*}
where $\la^\tnew$ is given in \eqref{level_switch_up_rule}.
Analogously, given marking strategy parameters $\Phi_\td, \Phi_\tm \in
[0,1]$ and $\tau \geq 1$, we compute $\coarsenArcs, \switchDownArcs
\subseteq \arcsPipes$ such that they are the maximal subsets of
arcs that satisfy
\begin{equation}
  \label{eq:marking-strategy-coarsen}
  \Phi_\td \sum_{\arc \in \arcsPipes}
  \eta_{\td, \arc}(\sol) \geq
  \sum_{\arc \in \coarsenArcs} \eta_{\td, \arc}(\sol)
\end{equation}
and
\begin{equation}
  \label{eq:marking-strategy-down}
  \Phi_\tm \sum_{\arc \in \arcsPipes^{< \eps}(\tau)}
  (\eta_{\tm,\arc}(\sol;\ell_\arc^\tnew) - \eta_{\tm,\arc}(\sol;\ell_\arc))
  \geq
  \sum_{\arc \in \switchDownArcs}
  (\eta_{\tm,\arc}(\sol;\ell_\arc^\tnew) -
  \eta_{\tm,\arc}(\sol;\ell_\arc))
\end{equation}
with
\begin{equation*}
  \arcsPipes^{< \eps}(\tau) \define
  \condset{\arc \in \arcsPipes}
  {\eta_{\tm,\arc}(\ell_\arc^\tnew) - \eta_{\tm,\arc}(\ell_\arc)
    \leq \tau \eps}.
\end{equation*}
In~\eqref{eq:marking-strategy-down}, $\ell_\arc^\tnew$ is always set
to $\min\set{\ell_\arc + 1, 3}$.
For every arc~$\arc \in \refineArcs$ ($\arc \in \coarsenArcs$) we refine
(coarsen) the discretization grid
by halving (doubling) the stepsize, \ie we set %$h_\arc \leftarrow h_\arc / 2$ for
% every $\arc \in \refineArcs$.
$\gamma = 2$ in \eqref{h_new}.
We note that these marking strategies
are very similar to the greedy strategies
on a network described in~\cite{DomDSLM17},
where those pipes are marked for a
spatial, temporal, or model refinement
which yield the largest error reduction.

%%%%%%%%%%%%%%%%%%%%%%%%%%%%%%%%%%%%%%%%%%%%%%%%%%%%%%%%%%%%%%%%%%%%%%%%%%%
\subsection{The Algorithm}
\label{sec:algorithm-subsec}

With these preliminaries we can now state the overall adaptive model
and discretization control algorithm for finding an
$\eps$-feasible solution of the reference
problem~\eqref{eq:entire-model-infinite-dim}.
The formal listing is given in Alg.~\ref{alg:algorithm}.
\begin{algorithm}
  \label{alg:algorithm}
  \DontPrintSemicolon
  \caption{Adaptive Model and Discretization Control}
  \KwIn{A full specification of the gas network~$\graph = (\nodes,
    \arcs)$, a tolerance~$\eps > 0$,
    initial marking strategy parameters~$\Theta_\td^0, \Theta_\tm^0,
    \Phi_\td^0, \Phi_\tm^0 \in [0,1]$, $\tau^0 \geq 1$,
    and an initial safeguard parameter~$\mu^0 \in \mathbb{N}$.}
  \KwOut{An $\eps$-feasible solution of the reference
    problem~\eqref{eq:entire-model-infinite-dim}.}
  Choose an initial model level~$\ell_\arc^0$ and a
  stepsize~$h_\arc^0$ for every $\arc \in \arcsPipes$.\;
  \label{alg:algorithm:instantiation}
  Solve Problem~\eqref{eq:entire-model-discr} and
  let $\sol^0$ denote the
  optimal solution.\;\label{alg:solveNLP1}
  Compute $\eta_\arc(\sol^0)$ for every $\arc \in
  \arcsPipes$.\;\label{alg:errorsPipes1}
  \If{$\sol^{0}$ is $\eps$-feasible}
  {\label{alg:eps_feas_sol1}
    \Return $\sol^{0}$.
  }
  Set $k = 1$ and
  $\Theta_\td^k  = \Theta_\td^0,
  \Theta_\tm^k  = \Theta_\tm^0,
  \Phi_\td^k  = \Phi_\td^0,
  \Phi_\tm^k  = \Phi_\tm^0,
  \mu^k = \mu^0,
  \tau^k = \tau^0$.\;
  \For{$k = 1, 2, \dotsc$}
  {\label{alg:while-loop}
    \For{$j = 1, \dotsc, \mu^k$}
    {
      Compute the sets~$\switchUpArcs_{k,j}, \refineArcs_{k,j}
      \subseteq \arcsPipes$ according to
      \eqref{eq:marking-strategy-refine} and
      \eqref{eq:marking-strategy-up}.
      \;\label{alg:algorithm:compute-sets-1}
      Switch up the model level for every pipe~$\arc \in
      \switchUpArcs_{k,j}$.
      \;\label{alg:algorithm:model-up}
      Refine the discretization grid for every pipe~$\arc \in
      \refineArcs_{k,j}$.
      \;\label{alg:algorithm:refine-grid}
      Solve Problem~\eqref{eq:entire-model-discr} and
      let $\sol^{k,j}$ denote the
      solution.\;\label{alg:solveNLP2}
      Compute $\eta_{\arc}(\sol^{k,j})$ for every $\arc \in \arcsPipes$.
      \;\label{alg:errorsPipes2}
      \If{$\sol^{k,j}$ is $\eps$-feasible}
      {\label{alg:eps_feas_sol2}
        \Return $\sol^{k,j}$.
      }
    }
    Compute the sets~$\switchDownArcs_k, \coarsenArcs_k \subseteq
    \arcsPipes$ according to
    \eqref{eq:marking-strategy-coarsen} and
    \eqref{eq:marking-strategy-down}.
    \;\label{alg:algorithm:compute-sets-2}
    Switch down the model level for every pipe~$\arc \in
    \switchDownArcs_k$.
    \;\label{alg:algorithm:model-down}
    Coarsen the discretization grid for every pipe~$\arc \in
    \coarsenArcs_k$.
    \;\label{alg:algorithm:coarsen-grid}
    Increase $k \leftarrow k+1$ and update parameters
    $\Theta_\td^{k},
    \Theta_\tm^{k},
    \Phi_\td^{k},
    \Phi_\tm^{k},
    \mu^{k},
    \tau^{k}$.
  }
\end{algorithm}

The algorithm makes use of the safeguard parameter~$\mu \in \mathbb{N}$.
This parameter ensures that the algorithm performs grid coarsenings
and switches down the model level only after applying
$\mu$ rounds of grid refinements and switching up model levels.
It prevents an alternating switching up and down model levels
or an alternating refining and coarsening of the discretization grid.
We note that this technique is similar to the use of hysteresis
  parameters; \cf, \eg \cite{MorMG92}.
By employing this safeguard, we can prove that
Alg.~\ref{alg:algorithm} terminates after a
finite number of iterations with an $\eps$-feasible point of the
reference model~\eqref{eq:stationary-momentum-equ}.

To improve readability, we split the proof of our main theorem into
two parts.
The first lemma states finite termination at an $\eps$-feasible
point if only discretization grid refinements and coarsenings are
applied, whereas the second lemma considers the case of switching
levels in the model hierarchy only, \ie with a fixed stepsize for every
pipe.

\begin{lemma}
  \label{lem:discr-lemma}
  Suppose that the model level~$\ell_\arc \in \set{1,2,3}$ is fixed
  for every pipe~$\arc \in \arcsPipes$.
  Let the resulting set of model levels be denoted by~$\mathcal{M}$.
  Suppose further that $\eta_\arc(\sol) = \eta_{\td,\arc}(\sol)$ holds
  in~\eqref{eq:firstOrderApprox}
  %and that an $\eps$-feasible solution
  %\wrt this model is obtained for sufficiently small stepsizes
  %$h^\tsuf_\arc$, $\arc \in \arcsPipes$
  and that every NLP is solved
  to local optimality.
  Consider Alg.~\ref{alg:algorithm} without applying the
  model switching steps in Lines~\ref{alg:algorithm:model-up} and
  \ref{alg:algorithm:model-down}.
  Then, the algorithm terminates after a finite number of refinements
  in Line~\ref{alg:algorithm:refine-grid} and coarsenings in
  Line~\ref{alg:algorithm:coarsen-grid} with an $\eps$-feasible
  solution \wrt\ model level set~$\mathcal{M}$
  if there exists a constant $C > 0$ such that
  \begin{equation}
    \label{eq:discr-parameter-rule}
    \frac{1}{2} \Theta_\td^k \mu^k > \Phi_\td^k + C
  \end{equation}
  holds for all $k$.
\end{lemma}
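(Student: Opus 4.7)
The plan is to track the aggregate discretization-error estimator $E(\sol) := \sum_{\arc \in \arcsPipes}\eta_{\td,\arc}(\sol)$ along the iterates of Alg.~\ref{alg:algorithm} and to show that condition~\eqref{eq:discr-parameter-rule} forces $E$ to contract geometrically from one outer iteration to the next, so that the $\eps$-feasibility threshold $E \leq \card{\arcsPipes}\,\eps$ is crossed in finitely many iterations.

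The first step is to quantify the effect of the refinement rounds in Line~\ref{alg:algorithm:refine-grid}. By the marking rule~\eqref{eq:marking-strategy-refine}, the arcs in $\refineArcs_{k,j}$ jointly carry at least a $\Theta_\td^k$-fraction of the current error, while by the first-order identity~\eqref{new_discr_error} their error estimators are approximately halved after the refinement. Combined, this gives a bound of the form $E(\sol^{k,j}) \leq (1 - \tfrac{1}{2}\Theta_\td^k)\,E(\sol^{k,j-1}) + \mathcal{R}^{k,j}$, where the remainder $\mathcal{R}^{k,j}$ collects the higher-order corrections hidden in the $\doteq$ of~\eqref{new_discr_error} together with the perturbation caused by re-solving the NLP on the refined grid. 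Iterating over $j = 1,\dotsc,\mu^k$ and then applying the analogous estimate for the coarsening step---which, by~\eqref{eq:marking-strategy-coarsen} and the doubling rule in~\eqref{new_discr_error}, reads $E(\sol^{k+1,0}) \leq (1 + \Phi_\td^k)\,E(\sol^{k,\mu^k}) + \widetilde{\mathcal{R}}^k$---yields, after compounding,
\[
E(\sol^{k+1,0}) \leq (1 + \Phi_\td^k)\,\bigl(1 - \tfrac{1}{2}\Theta_\td^k\bigr)^{\mu^k}\,E(\sol^{k,0}) + \mathcal{S}^k
\]
for some aggregate remainder $\mathcal{S}^k$.

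To conclude, I would use the elementary inequalities $(1 - \tfrac{1}{2}\Theta_\td^k)^{\mu^k} \leq \exp(-\tfrac{1}{2}\Theta_\td^k \mu^k)$ and $1 + \Phi_\td^k \leq \exp(\Phi_\td^k)$ to bound the leading factor by $\exp(\Phi_\td^k - \tfrac{1}{2}\Theta_\td^k \mu^k)$. Under~\eqref{eq:discr-parameter-rule}, the exponent is smaller than $-C$, so the leading factor is at most $e^{-C} =: \rho \in (0,1)$, uniformly in $k$. Choosing $C$ large enough that $\mathcal{S}^k$ is dominated---relative to $E(\sol^{k,0})$---by the gap between this factor and~$1$ gives a strict contraction $E(\sol^{k+1,0}) \leq \rho'\,E(\sol^{k,0})$ with some $\rho' \in (\rho,1)$. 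Hence $E(\sol^{k,0}) \to 0$ geometrically, so $E(\sol^{k,0}) \leq \card{\arcsPipes}\,\eps$ after finitely many outer iterations, at which point the $\eps$-feasibility test in Line~\ref{alg:eps_feas_sol2} triggers termination.

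The hard part will be making the remainder $\mathcal{S}^k$ rigorous. The $\doteq$ of~\eqref{new_discr_error} is only a first-order-in-$h$ statement, and on top of that each NLP is re-solved between consecutive refinement rounds, so its optimizer may shift in a way that is not a priori controlled by the grid change alone. A clean argument therefore requires either a uniform bound on the higher-order discretization corrections over all iterations and pipes, or a local Lipschitz-stability result for the NLP solution with respect to the stepsize parameters; both are nontrivial in this nonlinear setting and are exactly what the constant $C$ in~\eqref{eq:discr-parameter-rule} is introduced to absorb.
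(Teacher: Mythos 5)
Your proof is correct in substance and, modulo the higher-order remainder terms you flag, establishes the claimed finite termination; however, it takes a genuinely different route from the paper. The paper's proof is an \emph{additive decrease} argument: it tracks the net change in the total discretization error estimator $\eta_\td(\sol) = \sum_{\arc}\eta_{\td,\arc}(\sol)$ over one full outer iteration (telescoping the $\mu$ inner refinement rounds, then subtracting the coarsening increase), lower-bounds the refinement decrease by $\tfrac{1}{2}\Theta_\td\mu\sum_\arc\eta_{\td,\arc}^\mu$ using the monotonicity $\eta_{\td,\arc}^{j-1}\geq\eta_{\td,\arc}^\mu$, applies the marking rule \eqref{eq:marking-strategy-coarsen}, and then invokes the fact that the algorithm has not yet terminated---so $\sum_\arc\eta_{\td,\arc}^\mu > \card{\arcsPipes}\eps$---to conclude the net decrease per outer iteration exceeds the \emph{absolute} constant $C\card{\arcsPipes}\eps$. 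Your argument is instead a \emph{multiplicative contraction}: you compound the per-step factor $(1-\tfrac{1}{2}\Theta_\td)$ over the $\mu$ refinement rounds and the factor $(1+\Phi_\td)$ for coarsening, then use $1-x\le e^{-x}$ and $1+x\le e^x$ to reduce \eqref{eq:discr-parameter-rule} to a uniform contraction factor $e^{-C}<1$. Both arguments are valid and use the marking rules in essentially the same way, but they buy different things: yours yields a geometric rate and does not need the ``not yet $\eps$-feasible'' observation, while the paper's is more elementary (no exponential bounds, no compounding) and directly produces the uniform-decrease-per-outer-iteration statement that is reused almost verbatim in the proofs of Lemma~\ref{lem:model-lemma} and Theorem~\ref{thm:convergence}. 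Regarding the remainders $\mathcal{S}^k$ you worry about: the paper's proof silently treats the first-order identity \eqref{new_discr_error} as exact and likewise ignores the shift of the NLP optimizer between inner iterations, so your concern is legitimate but applies equally to the paper's own argument and is not a gap specific to your route.
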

\begin{proof}
  We consider the total
  discretization error
  \begin{equation*}
    \eta_\td(\sol) = \sum_{\arc \in \arcsPipes} \eta_{\td, \arc}(\sol)
  \end{equation*}
  and show that for every iteration~$k$ %its
  the difference between the decrease obtained in the
  inner for-loop %is strictly larger than its
  and the increase obtained due to
  the coarsenings applied in Line~\ref{alg:algorithm:coarsen-grid}
  is positive and uniformly bounded away from zero.
  In what follows, we only consider a single iteration and drop its
  index~$k$ for better readability.

  First, we consider one refinement step in
  Line~\ref{alg:algorithm:refine-grid}.
  Let $\eta_{\td,\arc}^{j-1}$ denote the discretization error before
  the $j$th inner iteration and let $\eta_{\td,\arc}^{j}$ denote the
  discretization error after the $j$th inner iteration.
  With this, we have
  \begin{align*}
    & \sum_{\arc \in \arcsPipes} \eta_{\td,\arc}^{j-1}
      - \sum_{\arc \in \arcsPipes} \eta_{\td,\arc}^{j}
    \\
    = &
        \sum_{\arc \in \refineArcs_j} \eta_{\td,\arc}^{j-1}
        + \sum_{\arc \in \arcsPipes \setminus \refineArcs_j} \eta_{\td,\arc}^{j-1}
        - \sum_{\arc \in \refineArcs_j} \eta_{\td,\arc}^{j}
        - \sum_{\arc \in \arcsPipes \setminus \refineArcs_j}
        \eta_{\td,\arc}^{j}
    \\
    = &
        \sum_{\arc \in \refineArcs_j} \eta_{\td,\arc}^{j-1}
        - \sum_{\arc \in \refineArcs_j} \eta_{\td,\arc}^{j}
    \\
    = &
        \sum_{\arc \in \refineArcs_j} \frac{1}{2} \eta_{\td,\arc}^{j-1}
  \end{align*}
  for every $j = 1, \dotsc, \mu$.
  For the last equality we have used that
  the implicit Euler method has convergence order~1,
  which (for small stepsizes $h_\arc$)
  implies $\eta_{\td,\arc}^j = \frac{1}{2} \eta_{\td,\arc}^{j-1}$
  when we take the new stepsize as half the current stepsize;
  see~\eqref{new_discr_error}.
  Summing up over all~$\mu$ inner iterations we obtain a telescopic
  sum and finally get an error decrease of
  \begin{equation*}
    \sum_{j=1}^\mu \Bigg( \sum_{\arc \in \arcsPipes}
      \eta_{\td,\arc}^{j-1} - \sum_{\arc \in \arcsPipes}
      \eta_{\td,\arc}^{j} \Bigg)
    =
    \sum_{\arc \in \arcsPipes} \eta_{\td,\arc}^{0}
    - \sum_{\arc \in \arcsPipes} \eta_{\td,\arc}^{\mu}
    =
    \frac{1}{2} \sum_{j=1}^\mu \sum_{\arc \in \refineArcs_j}
    \eta_{\td,\arc}^{j-1}.
  \end{equation*}

  We now consider the coarsening step.
  For this, let $\eta_{\td,\arc}^{\mu}$ denote the discretization error
  before and $\eta_{\td,\arc}^{\mu + 1}$ the discretization error after
  the coarsening step in Line~\ref{alg:algorithm:coarsen-grid}.
  Using similar ideas like above we obtain
  \begin{align*}
    & \sum_{\arc \in \arcsPipes} \eta_{\td,\arc}^{\mu + 1}
      - \sum_{\arc \in \arcsPipes} \eta_{\td,\arc}^{\mu}
    \\
    = \ &
          \sum_{\arc \in \arcsPipes \setminus \coarsenArcs} \eta_{\td,\arc}^{\mu + 1}
          + \sum_{\arc \in \coarsenArcs} \eta_{\td,\arc}^{\mu + 1}
          - \sum_{\arc \in \arcsPipes \setminus \coarsenArcs} \eta_{\td,\arc}^{\mu}
          - \sum_{\arc \in \coarsenArcs} \eta_{\td,\arc}^{\mu}
    \\
    = \ &
          \sum_{\arc \in \coarsenArcs} \eta_{\td,\arc}^{\mu + 1}
          - \sum_{\arc \in \coarsenArcs} \eta_{\td,\arc}^{\mu}
    \\
    = \ &
          2 \sum_{\arc \in \coarsenArcs} \eta_{\td,\arc}^{\mu}
          - \sum_{\arc \in \coarsenArcs} \eta_{\td,\arc}^{\mu}
    \\
    = \ &
          \sum_{\arc \in \coarsenArcs} \eta_{\td,\arc}^{\mu}.
  \end{align*}
  Thus, we are finished if we prove that
  \begin{equation*}
    \frac{1}{2} \sum_{j=1}^\mu \sum_{\arc \in \refineArcs_j}
    \eta_{\td,\arc}^{j-1}
    -
    \sum_{\arc \in \coarsenArcs} \eta_{\td,\arc}^{\mu}
  \end{equation*}
  is positive and uniformly bounded away from zero.
  Using
  \begin{equation*}
    \eta_{\td,\arc}^{j-1} \geq \eta_{\td,\arc}^{\mu},
    \quad
    \text{for all } j = 1, \dotsc, \mu,
  \end{equation*}
  \eqref{eq:marking-strategy-refine},
  \eqref{eq:marking-strategy-coarsen}, and \eqref{eq:discr-parameter-rule},
  we obtain
  \begin{align*}
    & \frac{1}{2} \sum_{j=1}^\mu \sum_{\arc \in \refineArcs_j}
      \eta_{\td, \arc}^{j-1}
      \geq \frac{1}{2} \Theta_\td \sum_{j=1}^\mu \sum_{\arc \in
      \arcsPipes} \eta_{\td, \arc}^{j-1}
      \geq \frac{1}{2} \Theta_\td \sum_{j=1}^\mu \sum_{\arc \in
      \arcsPipes} \eta_{\td, \arc}^{\mu}
    \\
    = \ & \frac{1}{2} \Theta_\td \mu \sum_{\arc \in \arcsPipes}
          \eta_{\td, \arc}^{\mu}
          > (\Phi_\td + C) \sum_{\arc \in \arcsPipes} \eta_{\td, \arc}^{\mu}
          > \sum_{\arc \in \coarsenArcs} \eta_{\td, \arc}^{\mu}
          + C \card{\arcsPipes} \eps,
  \end{align*}
  which completes the proof.
\end{proof}

Next, we prove an analogous lemma for the case that we fix the
stepsize of every arc~$\arc \in \arcsPipes$ and only allow for model
switching.

\begin{lemma}
  \label{lem:model-lemma}
  Suppose that the discretization stepsize $h_\arc$ is fixed for every
  pipe $\arc \in \arcsPipes$.
  Suppose further that $\eta_\arc(\sol) = \eta_{\tm,\arc}(\sol)$ holds
  in~\eqref{eq:firstOrderApprox}
  %and that an $\eps$-feasible solution
  %\wrt\ the fixed stepsizes is obtained for sufficiently accurate modeling
  %levels $\ell_\arc$, $\arc \in \arcsPipes$
  and that every NLP is solved to local optimality.
  Consider Alg.~\ref{alg:algorithm} without applying the discretization
  refinements in Line~\ref{alg:algorithm:refine-grid} and the coarsenings in
  Line~\ref{alg:algorithm:coarsen-grid}.
  Then, Algorithm~\ref{alg:algorithm} terminates after a finite number of model switches in
  Lines~\ref{alg:algorithm:model-up}
  and~\ref{alg:algorithm:model-down} with an $\eps$-feasible solution
  \wrt\ the stepsizes~$h_\arc$, $\arc \in \arcsPipes$, if there exists
  a constant $C > 0$ such that
  \begin{equation}
    \label{eq:model-parameter-rule}
    \Theta_\tm^k \mu^k > \tau^k \Phi_\tm^k \card{\arcsPipes} + C
  \end{equation}
  holds for all $k$.
\end{lemma}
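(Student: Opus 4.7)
The plan is to follow the same skeleton as the proof of Lemma~\ref{lem:discr-lemma}, replacing the aggregate discretization error by the aggregate model error
\(\eta_\tm(\sol) \define \sum_{\arc \in \arcsPipes} \eta_{\tm,\arc}(\sol)\),
and replacing the clean factor $1/2$ coming from first-order Euler convergence by the $\eps$-thresholds built into the level-switch rule~\eqref{level_switch_up_rule} and into the marking sets $\arcsPipes^{>\eps}$ and $\arcsPipes^{<\eps}(\tau)$. The goal is to show that in every outer iteration $k$ in which Alg.~\ref{alg:algorithm} has not yet returned, $\eta_\tm$ drops by at least $C\eps$; since $\eta_\tm \geq 0$, this forces termination after finitely many outer iterations.

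I would first analyse a single inner up-switching step in Line~\ref{alg:algorithm:model-up}. Writing $\eta_{\tm,\arc}^{j-1}$ and $\eta_{\tm,\arc}^{j}$ for the model errors before and after the $j$th inner step, the contributions from arcs outside $\switchUpArcs_{k,j}$ telescope away, and the marking condition~\eqref{eq:marking-strategy-up} together with the defining inequality of $\arcsPipes^{>\eps}$ yields the per-step decrease
\begin{equation*}
\sum_{\arc\in\arcsPipes}\eta_{\tm,\arc}^{j-1}-\sum_{\arc\in\arcsPipes}\eta_{\tm,\arc}^{j}
\geq \Theta_\tm^k \sum_{\arc\in\arcsPipes^{>\eps}_j}\bigl(\eta_{\tm,\arc}(\ell_\arc)-\eta_{\tm,\arc}(\ell_\arc^\tnew)\bigr)
> \Theta_\tm^k\,|\arcsPipes^{>\eps}_j|\,\eps .
\end{equation*}
Symmetrically, for the single model-coarsening step in Line~\ref{alg:algorithm:model-down}, the increase equals $\sum_{\arc\in\switchDownArcs_k}(\eta_{\tm,\arc}^{\mu+1}-\eta_{\tm,\arc}^{\mu})$, which by~\eqref{eq:marking-strategy-down} and the pointwise bound $\eta_{\tm,\arc}^{\mu+1}-\eta_{\tm,\arc}^\mu \leq \tau^k\eps$ valid on $\arcsPipes^{<\eps}(\tau^k)$ is bounded above by $\Phi_\tm^k\,\tau^k\,\eps\,|\arcsPipes|$.

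The crucial extra step, which has no counterpart in Lemma~\ref{lem:discr-lemma}, is to argue that $|\arcsPipes^{>\eps}_j| \geq 1$ for every $j = 1,\ldots,\mu^k$ whenever the current iterate is not yet $\eps$-feasible. Non-$\eps$-feasibility gives $\sum_\arc \eta_{\tm,\arc} > |\arcsPipes|\eps$, so by pigeon-holing some arc $\arc$ admits $\eta_{\tm,\arc}(\ell_\arc) > \eps$; such an arc must satisfy $\ell_\arc > 1$, since model level $1$ has zero model error by convention. A short case distinction on rule~\eqref{level_switch_up_rule} — either $\eta_{\tm,\arc}(\ell_\arc) - \eta_{\tm,\arc}(\ell_\arc - 1) > \eps$ already, or $\ell_\arc^\tnew = 1$ and the one-shot reduction equals $\eta_{\tm,\arc}(\ell_\arc) > \eps$ — places this arc in $\arcsPipes^{>\eps}_j$. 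Summing the per-step decrease over $j = 1,\ldots,\mu^k$, subtracting the single down-switching increase, and invoking~\eqref{eq:model-parameter-rule} then produces a uniform net decrease of at least $\eps\bigl(\Theta_\tm^k \mu^k - \tau^k \Phi_\tm^k |\arcsPipes|\bigr) > C\eps$ per outer iteration, which closes the argument. The main obstacle will be precisely this case distinction: the abrupt jump to level~$1$ in~\eqref{level_switch_up_rule} breaks the clean symmetry between refinement and coarsening that the halving/doubling enjoyed in Lemma~\ref{lem:discr-lemma}, and it has to be treated explicitly both for lower-bounding the inner-loop decrease and for ensuring that non-emptiness of $\arcsPipes^{>\eps}_j$ is inherited across the inner iterations.
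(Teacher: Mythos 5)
Your proposal is correct and follows essentially the same route as the paper's proof: track the aggregate model error $\eta_\tm$, telescope the non-marked arcs away, lower-bound the inner-loop decrease via~\eqref{eq:marking-strategy-up} and the defining inequality of $\arcsPipes^{>\eps}$, upper-bound the down-switch increase via~\eqref{eq:marking-strategy-down} and the defining inequality of $\arcsPipes^{<\eps}(\tau)$, and combine with~\eqref{eq:model-parameter-rule}. The one genuine addition is your explicit pigeonhole-plus-case-distinction justification of $\card{\arcsPipes^{>\eps}} \geq 1$ (including the jump-to-level-$1$ branch of rule~\eqref{level_switch_up_rule}), which the paper merely asserts in passing; this fills a small but real gap and is a welcome clarification rather than a departure in method.
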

\begin{proof}
  We consider the total
  model error
  \begin{equation*}
    \eta_\tm(\sol) = \sum_{\arc \in \arcsPipes} \eta_{\tm, \arc}(\sol)
  \end{equation*}
  and show that the difference between
  the decrease obtained in the inner loop
  and the increase obtained due to switching
  model levels down in Line~\ref{alg:algorithm:model-down}
  is positive and uniformly bounded away from zero for
  every iteration~$k$.
  We again consider only a single iteration and drop the corresponding
  index.

  First, we consider a single step of switching up the model level in
  Line~\ref{alg:algorithm:model-up}.
  Let~$\eta_{\tm,\arc}^{j-1}$ denote the model error before the $j$th
  inner iteration and $\eta_{\tm,\arc}^j$ the model error
  after the $j$th inner iteration.
  We then have
  \begin{align*}
    & \sum_{\arc \in \arcsPipes} \eta_{\tm,\arc}^{j-1}
      - \sum_{\arc \in \arcsPipes} \eta_{\tm,\arc}^j
    \\
    = & \sum_{\arc \in \switchUpArcs_j} \eta_{\tm,\arc}^{j-1}
        + \sum_{\arc \in \arcsPipes \setminus \switchUpArcs_j} \eta_{\tm,\arc}^{j-1}
        - \sum_{\arc \in \switchUpArcs_j} \eta_{\tm,\arc}^j
        - \sum_{\arc \in \arcsPipes \setminus \switchUpArcs_j} \eta_{\tm,\arc}^j \\
    = & \sum_{\arc \in \switchUpArcs_j} \eta_{\tm,\arc}^{j-1}
        - \sum_{\arc \in \switchUpArcs_j} \eta_{\tm,\arc}^j
  \end{align*}
  for every $j = 1, \dotsc, \mu$.
  Summing up over all $j$ yields the overall
  model error decrease
  after $\mu$ for-loop iterations of
  \begin{equation*}
    \sum_{j=1}^\mu \Bigg(
      \sum_{\arc \in \arcsPipes} \eta_{\tm,\arc}^{j-1}
      - \sum_{\arc \in \arcsPipes} \eta_{\tm,\arc}^j
    \Bigg)
    =
    \sum_{\arc \in \arcsPipes} \eta_{\tm,\arc}^{0}
    - \sum_{\arc \in \arcsPipes} \eta_{\tm,\arc}^{\mu}
    =
    \sum_{j=1}^\mu \sum_{\arc \in \switchUpArcs_j}
    (\eta_{\tm,\arc}^{j-1}
    - \eta_{\tm,\arc}^j).
  \end{equation*}

  We now consider the step of switching down the model level
  in Line~\ref{alg:algorithm:model-down}.
  Let~$\eta_{\tm,\arc}^{\mu}$ denote the model error before
  and~$\eta_{\tm,\arc}^{\mu + 1}$ the model error after this step.
  It holds that
  \begin{align*}
    & \sum_{\arc \in \arcsPipes} \eta_{\tm,\arc}^{\mu+1} - \sum_{\arc
      \in \arcsPipes} \eta_{\tm,\arc}^\mu \\
    = & \sum_{\arc \in \switchDownArcs} \eta_{\tm,\arc}^{\mu+1}
        + \sum_{\arc \in \arcsPipes \setminus \switchDownArcs} \eta_{\tm,\arc}^{\mu+1}
        - \sum_{\arc \in \switchDownArcs} \eta_{\tm,\arc}^\mu
        - \sum_{\arc \in \arcsPipes \setminus \switchDownArcs} \eta_{\tm,\arc}^\mu \\
    = & \sum_{\arc \in \switchDownArcs} (\eta_{\tm,\arc}^{\mu+1}
        - \eta_{\tm,\arc}^\mu).
  \end{align*}
  Thus, the proof is finished if we show that
  \begin{equation*}
    \sum_{j=1}^\mu \sum_{\arc \in \switchUpArcs_j}
    (\eta_{\tm,\arc}^{j-1}
    - \eta_{\tm,\arc}^j)
    - \sum_{\arc \in \switchDownArcs} (\eta_{\tm,\arc}^{\mu+1}
    - \eta_{\tm,\arc}^\mu)
  \end{equation*}
  is positive and uniformly bounded away from zero.
  With similar ideas as in the proof of Lemma~\ref{lem:discr-lemma}
  and using \eqref{eq:marking-strategy-up},
  \eqref{eq:marking-strategy-down}, and \eqref{eq:model-parameter-rule},
  we obtain
  \begin{align*}
    & \sum_{j=1}^\mu \sum_{\arc \in \switchUpArcs_j}
      (\eta_{\tm,\arc}^{j-1}
      - \eta_{\tm,\arc}^j)
      \geq \Theta_\tm \sum_{j=1}^\mu
      \sum_{\arc \in \arcsPipes^{>\eps}}
      (\eta_{\tm,\arc}^{j-1}
      - \eta_{\tm,\arc}^j)
      > \Theta_\tm \sum_{j=1}^\mu
      \sum_{\arc \in \arcsPipes^{>\eps}}
      \eps
    \\
    = \
    & \Theta_\tm \mu
      \card{\arcsPipes^{>\eps}}
      \eps
      \geq
      \Theta_\tm \mu
      \eps
      >
      \tau \Phi_\tm \card{\arcsPipes} \eps + C \eps
      \geq
      \Phi_\tm \sum_{\arc \in \arcsPipes^{<\eps}(\tau)}
      \eps \tau + C \eps
    \\
    \geq \
    & \Phi_\tm \sum_{\arc \in \arcsPipes^{<\eps}(\tau)}
      (\eta_{\tm,\arc}^{\mu+1}
      - \eta_{\tm,\arc}^\mu)
      + C \eps
      \geq
      \sum_{\arc \in \switchDownArcs} (\eta_{\tm,\arc}^{\mu+1}
      - \eta_{\tm,\arc}^\mu)
      + C \eps,
  \end{align*}
  where we used that $\card{\arcsPipes^{>\eps}} \geq 1$.
  This completes the proof.
\end{proof}

Let $\eta_{\tm,\arc}^\tnew(\sol)$
denote the new model error estimator
after a grid refinement or coarsening.
In order to prove our main theorem
we need to assume that, for every pipe
$\arc \in \arcsPipes$, the change in
the model error estimator after a grid
refinement or coarsening can be
neglected as compared to $\eta_{\tm,\arc}(\sol)$,
\ie $\abs{\eta_{\tm,\arc}(\sol) - \eta_{\tm,\arc}^\tnew(\sol)}
\ll \eta_{\tm,\arc}(\sol)$,
such that we may write
$\eta_{\tm,\arc}^\tnew(\sol) =
\eta_{\tm,\arc}(\sol)$.
A sufficient condition for this
assumption to hold is given by
$\eta_{\td,\arc}(\sol) \ll \eta_{\tm,\arc}(\sol)$
for every $\arc \in \arcsPipes$.
This condition also implies that
$\eta_{\tm,\arc}(\sol)$ is a first-order
approximation of the exact model
error $e_{\tm,\arc}(\sol)$ and is thus
reliable for small stepsizes~$h_\arc$.

% A necessary assumption for the proof of our main theorem
% is that the model error estimator $\eta_{\tm,\arc}(\sol)$
% defined in~\eqref{eq:modelErrorEstimator} is a good
% approximation of the exact model error~$e_{\tm,\arc}(\sol)$ for every
% pipe~$\arc \in \arcsPipes$.
% That means that the discrete solutions
% $\pressure^1(\spatVar_r;2h_\arc)$ and
% $\pressure^{\la}(\spatVar_r;h_\arc)$ are good
% approximations of the exact solutions
% $\hat{\pressure}(\spatVar)$ and $\pressure^{\Mla}(\spatVar)$
% of the models~(M$_1$) and~(M$_{\la}$).
% In this case, the change in $\eta_{\tm,\arc}(\sol)$ after
% a grid coarsening or refinement can be neglected.
% This assumption is satisfied if and only if
% $\eta_{\td,\arc}(\sol) \ll \eta_{\tm,\arc}(\sol)$
% for every $\arc \in \arcsPipes$.
%
\begin{lemma}
  \label{lem:reliableModelErrorEstimator}
  Let the discretization and model error estimator
  $\eta_{\td,\arc}(\sol)$ and $\eta_{\tm,\arc}(\sol)$
  as defined in~\eqref{eq:discretizationErrorEstimator}
  and~\eqref{eq:modelErrorEstimator}
  be given for every $\arc \in \arcsPipes$.
  Let further $e_{\tm,\arc}(\sol)$ be the
  exact error between models~(M$_1$)
  and~(M$_{\la}$) and let
  $\eta_{\tm,\arc}^\tnew(\sol)$
  be the new model error estimator
  after a grid refinement or coarsening.
  Then, the implications
  \begin{enumerate}
   \item $\eta_{\td,\arc}(\sol) \ll \eta_{\tm,\arc}(\sol)
   \implies \eta_{\tm,\arc}(\sol) \doteq e_{\tm,\arc}(\sol)$,
   \label{lem:item:reliableModelError}

   \item $\eta_{\td,\arc}(\sol) \ll \eta_{\tm,\arc}(\sol)
   \implies \eta_{\tm,\arc}^\tnew(\sol) = \eta_{\tm,\arc}(\sol)$
   \label{lem:item:neglectibeChangeModelError}
  \end{enumerate}
  hold for every $\arc \in \arcsPipes$.

%   Then, the following statements are equivalent:
%   %
%   \begin{enumerate}
%    \item $\eta_{\td,\arc}(\sol) \ll \eta_{\tm,\arc}(\sol)$,
% 	  \label{lem:item:neglectibleDiscrError}
%    \item $\eta_{\tm,\arc}(\sol) \doteq e_{\tm,\arc}(\sol)$,
% 	  \label{lem:item:reliableModelError}
%    \item $\eta_{\tm,\arc}^\tnew(\sol) = \eta_{\tm,\arc}(\sol)$.
% 	  \label{lem:item:neglectibeChangeModelError}
%   \end{enumerate}
\end{lemma}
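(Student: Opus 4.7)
The plan is to derive both implications from a single first-order expansion of the discrete pressure profiles around their exact continuous counterparts, exploiting that the implicit Euler scheme has convergence order~$\beta=1$.

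Fix $\arc\in\arcsPipes$ and let $\hat{p}^j(x)$ denote the exact solution of the continuous model~(M$_j$) for $j\in\{1,\la\}$. Since $(\tD_j)$ is first-order convergent (cf.~\cite{StoB80}), the discrete profiles admit expansions
\begin{equation*}
  p^j(x;h) \;=\; \hat{p}^j(x) + c^j(x)\,h + O(h^2), \qquad j\in\{1,\la\},
\end{equation*}
with coefficients $c^j$ independent of $h$. Substituting these into~\eqref{eq:discretizationErrorEstimator} and~\eqref{eq:modelErrorEstimator}, evaluating on $\Gamma_3$, and applying the triangle inequality gives
\begin{equation*}
  \eta_{\td,\arc}(\sol) \;=\; 2\,\|c^1\|_\infty\,h_\arc + O(h_\arc^2),
  \qquad
  \eta_{\tm,\arc}(\sol) \;=\; e_{\tm,\arc}(\sol) + O(h_\arc),
\end{equation*}
where $e_{\tm,\arc}(\sol)=\|\hat{p}^1-\hat{p}^\la\|_\infty$ on $\Gamma_3$ and the $O(h_\arc)$ constant in the second expression depends only on $c^1$ and $c^\la$.

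For item~\ref{lem:item:reliableModelError}, the reverse triangle inequality yields $|\eta_{\tm,\arc}-e_{\tm,\arc}|=O(h_\arc)$, which by the first display is of the same order in $h_\arc$ as $\eta_{\td,\arc}$. Under the hypothesis $\eta_{\td,\arc}\ll\eta_{\tm,\arc}$, this $O(h_\arc)$ correction is therefore negligible relative to $\eta_{\tm,\arc}$ itself, which is precisely the content of the first-order approximation $\eta_{\tm,\arc}\doteq e_{\tm,\arc}$ in the sense of~\cite[page~420]{StoB80}. For item~\ref{lem:item:neglectibeChangeModelError}, a grid refinement or coarsening replaces $h_\arc$ by $h_\arc^{\tnew}\in\{h_\arc/\gamma,\gamma h_\arc\}$ with fixed $\gamma>1$, so $h_\arc^{\tnew}=\Theta(h_\arc)$. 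Repeating the expansion with the new stepsize gives $\eta_{\tm,\arc}^{\tnew}(\sol)=e_{\tm,\arc}(\sol)+O(h_\arc^{\tnew})=e_{\tm,\arc}(\sol)+O(h_\arc)$. Subtracting this from the expansion of $\eta_{\tm,\arc}(\sol)$ produces $\eta_{\tm,\arc}^{\tnew}-\eta_{\tm,\arc}=O(h_\arc)=O(\eta_{\td,\arc})$, which is once more negligible compared to $\eta_{\tm,\arc}$ under the hypothesis, so $\eta_{\tm,\arc}^{\tnew}=\eta_{\tm,\arc}$ to leading order.

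The main obstacle is essentially notational rather than mathematical: one has to fix precise meanings for the informal relations ``$\doteq$'' and ``$\ll$''---for instance, $A\doteq B$ as $A-B=o(B)$ in the limit $h_\arc\to 0$, and $A\ll B$ as $A=o(B)$---that are consistent with the usage in~\eqref{eq:firstOrderApprox}. Once such conventions are fixed, both implications are immediate consequences of the order-one convergence of the implicit Euler scheme and the reverse triangle inequality; the only place where care is needed is to verify that the coefficient in the $O(h_\arc)$ remainder of $\eta_{\tm,\arc}$ is genuinely of the same magnitude as the coefficient in $\eta_{\td,\arc}$, which is why the assumption must be phrased as a comparison of the two estimators rather than as an absolute smallness condition on $h_\arc$.
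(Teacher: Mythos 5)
Your proof is correct and takes essentially the same route as the paper's. Both arguments hinge on the first-order convergence of the implicit Euler scheme, and both reduce the claim to the observation that $\eta_{\tm,\arc}-e_{\tm,\arc}$ (and likewise $\eta_{\tm,\arc}^{\tnew}-\eta_{\tm,\arc}$) is a linear combination of discretization-error contributions from models~$(\tD_1)$ and~$(\tD_{\la})$, of the same order in $h_\arc$ as $\eta_{\td,\arc}$, so that $\eta_{\td,\arc}\ll\eta_{\tm,\arc}$ renders it negligible. Where the paper works pointwise via an exact telescoping identity $e_\tm = e_\td^1 - e_\td^{\la} + \eta_\tm$ and then applies the Richardson-type approximation $\eta_\td^j \doteq e_\td^j$, you inject a one-term asymptotic expansion $p^j(x;h)=\hat p^j(x)+c^j(x)h+O(h^2)$ from the start and then read off the same structure; this is a cosmetic rather than substantive difference.

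One point worth aligning with the paper: you correctly flag at the end that the $O(h_\arc)$ correction to $\eta_{\tm,\arc}$ involves \emph{both} $c^1$ and $c^{\la}$, whereas $\eta_{\td,\arc}$ as defined in~\eqref{eq:discretizationErrorEstimator} involves only $c^1$, so the hypothesis alone does not directly control the $c^{\la}$-contribution. The paper addresses this by asserting the equivalence $\abs{\eta_\td^1}\ll\abs{\eta_\tm}\iff\abs{\eta_\td^{\la}}\ll\abs{\eta_\tm}$ (its relation~\eqref{eq:neglectionDiscrError2}), justified informally by the fact that both discretization-error estimators are built from the same pair of stepsizes $2h_\arc$ and $4h_\arc$; in your notation this amounts to assuming $\norm{c^1}_\infty$ and $\norm{c^{\la}}_\infty$ are comparable. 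You identified the same gap but left it as a caveat rather than a stated assumption; closing it by invoking (or re-deriving) the paper's comparability relation would bring your argument exactly in line with the paper's level of rigor.
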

\begin{proof}
  Let pipe $\arc \in \arcsPipes$ be arbitrary.
  To improve readability,  in the following we drop the dependencies of the exact errors and
  the error estimators on~$\arc$ and~$\sol$.
  Without loss of generality, we consider only
  one arbitrary spatial gridpoint~$x_k$.

  Let us first introduce some notation.
  The exact model error is given by
  $e_\tm(x_k) = \hat{\pressure}(\spatVar_k)
  - \pressure^{\Mla}(\spatVar_k)$ for
  the current model level~$\la$,
  the exact discretization error for
  model~(D$_1$) is given by
  $e_\td^1(\spatVar_k) = \hat{\pressure}(\spatVar_k)
  - \pressure^1(\spatVar_k;2h_\arc)$
  and the exact discretization error for
  model~($\Dla$) is denoted by
  $e_\td^{\la}(\spatVar_k) = \pressure^{\Mla}(\spatVar_k)
  - \pressure^{\la}(\spatVar_k;2h_\arc)$.
  Furthermore, the model error estimator
  is given by
  $\eta_\tm(\spatVar_k) = \pressure^1(\spatVar_k;2h_\arc)
  - \pressure^{\la}(\spatVar_k;2h_\arc)$,
  \cf \eqref{eq:modelErrorEstimator}, and we
  define the discretization error estimators
  $\eta_\td^1(\spatVar_k) \define
  \pressure^1(\spatVar_k;2h_\arc)
  - \pressure^1(\spatVar_k;4h_\arc)$
  and
  $\eta_\td^{\la}(\spatVar_k) \define
  \pressure^{\la}(\spatVar_k;2h_\arc)
  - \pressure^{\la}(\spatVar_k;4h_\arc)$
  as in~\eqref{eq:discretizationErrorEstimator}.
  Then, we have $\eta_\td^1(\spatVar_k)
  \doteq e_\td^1(\spatVar_k)$
  and
  $\eta_\td^{\la}(\spatVar_k)
  \doteq e_\td^{\la}(\spatVar_k)$;
  see \cite[page~420]{StoB80}.
  Further, it holds that
  \begin{equation}
   \label{eq:neglectionDiscrError2}
   \abs{\eta_\td^1(\spatVar_k)}
  \ll \abs{\eta_\tm(\spatVar_k)}
  \iff |\eta_\td^{\la}(\spatVar_k)|
  \ll \abs{\eta_\tm(\spatVar_k)},
  \end{equation}
  because $\eta_\td^1(\spatVar_k)$
  and $\eta_\td^{\la}(\spatVar_k)$
  use the same stepsizes $2h_\arc$
  and $4h_\arc$ to compute the
  discrete pressure distributions.

  We now prove implication
  \eqref{lem:item:reliableModelError}.
%   We now prove the equivalence between
%   \eqref{lem:item:neglectibleDiscrError} and
%   \eqref{lem:item:reliableModelError}.
  Using the previously defined
  notation it holds that
  \begin{align*}
   e_\tm(\spatVar_k) &
   = \hat{\pressure}(\spatVar_k)
  - \pressure^{\Mla}(\spatVar_k) \\
  & = e_\td^1(\spatVar_k)
      + \pressure^1(\spatVar_k;2h_\arc)
      - e_\td^{\la}(\spatVar_k)
      - \pressure^{\la}(\spatVar_k;2h_\arc) \\
  & \doteq \eta_\td^1(\spatVar_k)
      + \pressure^1(\spatVar_k;2h_\arc)
      - \eta_\td^{\la}(\spatVar_k)
      - \pressure^{\la}(\spatVar_k;2h_\arc) \\
  & = \eta_\td^1(\spatVar_k)
      - \eta_\td^{\la}(\spatVar_k)
      + \eta_\tm(\spatVar_k).
  \end{align*}
  Thus, if $\abs{\eta_\td^1(\spatVar_k)}$ and
  $\abs{\eta_\td^{\la}(\spatVar_k)}$
  may be neglected as compared to
  $\abs{\eta_\tm(\spatVar_k)}$, then
  we have $e_\tm(\spatVar_k) \doteq
  \eta_\tm(\spatVar_k)$, \ie
  \begin{equation*}
    %\label{eq:neglectionDiscrError1}
   \abs{\eta_\td^1(\spatVar_k)}
  \ll \abs{\eta_\tm(\spatVar_k)}
  \, \wedge \, |\eta_\td^{\la}(\spatVar_k)|
  \ll \abs{\eta_\tm(\spatVar_k)}
  \implies
  e_\tm(\spatVar_k) \doteq
  \eta_\tm(\spatVar_k).
  \end{equation*}
  Considering also the equivalence
  relation~\eqref{eq:neglectionDiscrError2}
  it follows that
  \begin{equation*}
   \abs{\eta_\td^1(\spatVar_k)}
   \ll \abs{\eta_\tm(\spatVar_k)}
   \implies
   e_\tm(\spatVar_k) \doteq
   \eta_\tm(\spatVar_k),
  \end{equation*}
  from which implication~\eqref{lem:item:reliableModelError}
  follows directly.

  Finally, we prove implication
  \eqref{lem:item:neglectibeChangeModelError}.
%   Finally, we prove the equivalence between
%   \eqref{lem:item:neglectibleDiscrError} and
%   \eqref{lem:item:neglectibeChangeModelError}.
  We show that this implication holds
  for the case that $\eta_\tm^\tnew(\spatVar_k)$
  is the new model error estimator after
  a grid coarsening.
  The case for a grid refinement can
  be shown analogously.
  It holds that
  \begin{align*}
   \eta_\tm^\tnew(\spatVar_k) & =
   \pressure^1(\spatVar_k;4h_\arc)
    - \pressure^{\la}(\spatVar_k;4h_\arc) \\
   & = - \eta_\td^1(\spatVar_k)
       + \pressure^1(\spatVar_k;2h_\arc)
       + \eta_\td^{\la}(\spatVar_k)
       - \pressure^{\la}(\spatVar_k;2h_\arc) \\
  & = - \eta_\td^1(\spatVar_k)
      + \eta_\td^{\la}(\spatVar_k)
      + \eta_\tm(\spatVar_k).
  \end{align*}
  This yields
  \begin{equation}
   \label{lem:eq:neglectionDiscrError3}
   \abs{\eta_\td^1(\spatVar_k)}
   \ll \abs{\eta_\tm(\spatVar_k)}
   \, \wedge \, |\eta_\td^{\la}(\spatVar_k)|
   \ll \abs{\eta_\tm(\spatVar_k)}
   \implies
   \eta_\tm^\tnew(\spatVar_k) = \eta_\tm(\spatVar_k).
  \end{equation}
  Again, considering \eqref{eq:neglectionDiscrError2}
  and~\eqref{lem:eq:neglectionDiscrError3}
  results in
  \begin{equation*}
   \abs{\eta_\td^1(\spatVar_k)}
   \ll \abs{\eta_\tm(\spatVar_k)}
   \implies
   \eta_\tm^\tnew(\spatVar_k) = \eta_\tm(\spatVar_k),
  \end{equation*}
  from which implication~\eqref{lem:item:neglectibeChangeModelError}
  follows immediately.
\end{proof}

With the three preceding lemmas at hand, we are now ready to state and
prove our main theorem about finite termination of
Alg.~\ref{alg:algorithm}.
\begin{theorem}[Finite termination]
  \label{thm:convergence}
  %Suppose that an $\eps$-feasible solution \wrt\ the
  %reference problem~\eqref{eq:entire-model-infinite-dim} is obtained for
  %sufficiently accurate model levels~$\ell_\arc$ and sufficiently small
  %stepsizes $h^\tsuf_\arc$ for every $\arc \in \arcsPipes$.
  Suppose that $\eta_{\td,\arc} \ll \eta_{\tm,\arc}$
  for every $\arc \in \arcsPipes$ and that every NLP is solved
  to local optimality.
  Then, Algorithm~\ref{alg:algorithm} terminates after a finite number
  of refinements, coarsenings and model switches in
  Lines~\ref{alg:algorithm:model-up}, \ref{alg:algorithm:refine-grid},
  \ref{alg:algorithm:model-down}, and~\ref{alg:algorithm:coarsen-grid}
  with an $\eps$-feasible solution \wrt\ the reference
  problem~\eqref{eq:entire-model-infinite-dim}
  if there exist constants $C_1,C_2 > 0$ such that
  \begin{equation*}
    %\label{eq:discr-parameter-rule-combined}
    \frac{1}{2} \Theta_\td^k \mu^k > \Phi_\td^k + C_1,
    \quad
    \Theta_\tm^k \mu^k > \tau^k \Phi_\tm^k \card{\arcsPipes} + C_2
  \end{equation*}
  hold for all $k$.
\end{theorem}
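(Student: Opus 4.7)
The plan is to reduce the combined statement to Lemmas~\ref{lem:discr-lemma} and~\ref{lem:model-lemma} by establishing that, under the standing hypothesis $\eta_{\td,\arc} \ll \eta_{\tm,\arc}$, grid adaptations and model switches essentially decouple: grid operations affect only the total discretization error, while model switches affect only the total model error. Once this decoupling is in place, the two uniform per-iteration decreases derived in the preceding lemmas can simply be added, and finite termination follows from the nonnegativity of both error components.

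First I would establish a \emph{one-sided} decoupling that does not even require Lemma~\ref{lem:reliableModelErrorEstimator}: the discretization error estimator
$\eta_{\td,\arc}(\sol) = \norm{\pressure^1(\spatVar_r;2h_\arc) - \pressure^1(\spatVar_r;4h_\arc)}_\infty$
is computed exclusively from Model~\eqref{eq:stationary-momentum-equ-discretized} (that is, level $\ell=1$) and therefore depends only on $h_\arc$, not on the current model level $\ell_\arc$. Hence the model switches in Lines~\ref{alg:algorithm:model-up} and~\ref{alg:algorithm:model-down} leave $\eta_{\td,\arc}$ unchanged. For the other direction I would invoke Lemma~\ref{lem:reliableModelErrorEstimator}(\ref{lem:item:neglectibeChangeModelError}): under the assumption $\eta_{\td,\arc} \ll \eta_{\tm,\arc}$, every grid refinement and every coarsening satisfies $\eta_{\tm,\arc}^\tnew = \eta_{\tm,\arc}$ in the first-order sense, so that the model error estimator is not affected by grid operations either.

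With these two facts, within any outer iteration~$k$ the net change in $\sum_\arc \eta_{\td,\arc}$ is produced solely by the refinements in Line~\ref{alg:algorithm:refine-grid} and the coarsening in Line~\ref{alg:algorithm:coarsen-grid}, while the net change in $\sum_\arc \eta_{\tm,\arc}$ is produced solely by the switch-ups in Line~\ref{alg:algorithm:model-up} and the switch-down in Line~\ref{alg:algorithm:model-down}. Consequently, the bookkeeping from the proof of Lemma~\ref{lem:discr-lemma} carries over verbatim and, using $\tfrac{1}{2}\Theta_\td^k \mu^k > \Phi_\td^k + C_1$, produces a uniform net decrease of at least $C_1 \card{\arcsPipes}\eps$ in the total discretization error per iteration; analogously the proof of Lemma~\ref{lem:model-lemma}, combined with $\Theta_\tm^k \mu^k > \tau^k \Phi_\tm^k \card{\arcsPipes} + C_2$, produces a uniform net decrease of at least $C_2 \eps$ in the total model error per iteration.

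Adding the two estimates, the total error $\sum_\arc \eta_\arc(\sol) = \sum_\arc\bigl(\eta_{\td,\arc}(\sol)+\eta_{\tm,\arc}(\sol)\bigr)$ decreases by at least $(C_1\card{\arcsPipes}+C_2)\eps$ in every outer iteration in which the stopping test in Line~\ref{alg:eps_feas_sol2} is not satisfied. Since this sum is bounded below by zero, this can happen only finitely often, after which Alg.~\ref{alg:algorithm} must return an $\eps$-feasible point. The principal obstacle is not any single estimate but a conceptual one: verifying that the first-order decoupling delivered by Lemma~\ref{lem:reliableModelErrorEstimator} is sharp enough for the strict inequalities of Lemmas~\ref{lem:discr-lemma} and~\ref{lem:model-lemma} to remain valid when both adaptation mechanisms operate simultaneously within one iteration, so that the constants $C_1$ and $C_2$ can indeed be chosen uniformly in $k$ despite the accumulation of first-order approximations across the run of the algorithm.
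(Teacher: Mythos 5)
Your proposal follows essentially the same route as the paper's proof: both arguments split the total error estimator into its discretization and model components, use the observation that $\eta_{\td,\arc}$ is computed solely from the level-$1$ model (hence unaffected by model switching) together with Lemma~\ref{lem:reliableModelErrorEstimator}\eqref{lem:item:neglectibeChangeModelError} (so $\eta_{\tm,\arc}$ is unaffected, to first order, by grid operations), and then add the per-iteration decreases established in Lemmas~\ref{lem:discr-lemma} and~\ref{lem:model-lemma}. The paper simply carries out the combined bookkeeping explicitly in one chain of inequalities where you invoke the lemmas' proofs more compactly, but the decoupling argument, the role of the parameter conditions, and the final descent estimate coincide.
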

\begin{proof}
  We consider the total error
  $\sum_{\arc \in \arcsPipes} \eta_\arc$
  and show that the difference between the decrease
  obtained in the inner loop and the increase obtained
  due to switching down the model level and coarsening
  the grid is positive and uniformly bounded away from zero
  for every iteration~$k$.
  Again, we consider only a single iteration and drop the
  corresponding index.
  We first consider Lines~\ref{alg:algorithm:model-up} and
  \ref{alg:algorithm:refine-grid} for fixed~$j$.
  It holds that
  \begin{align*}
    & \sum_{\arc \in \arcsPipes} \eta_\arc^{j-1}
      - \sum_{\arc \in \arcsPipes} \eta_\arc^{j}
    \\
    = & \sum_{\arc \in \arcsPipes} \eta_{\tm,\arc}^{j-1}
        + \sum_{\arc \in \arcsPipes} \eta_{\td,\arc}^{j-1}
        - \sum_{\arc \in \arcsPipes} \eta_{\tm,\arc}^{j}
        - \sum_{\arc \in \arcsPipes} \eta_{\td,\arc}^{j}
    \\
    = & \sum_{\arc \in \arcsPipes \setminus (\switchUpArcs_{j} \cup
        \refineArcs_{j})} \eta_{\tm,\arc}^{j-1}
        - \sum_{\arc \in \arcsPipes \setminus (\switchUpArcs_{j}
        \cup \refineArcs_{j})} \eta_{\tm,\arc}^{j}
    + \sum_{\arc \in \switchUpArcs_{j} \setminus \refineArcs_j} \eta_{\tm,\arc}^{j-1}
      - \sum_{\arc \in \switchUpArcs_{j} \setminus \refineArcs_j}
      \eta_{\tm,\arc}^{j}
    \\
    & + \sum_{\arc \in \refineArcs_{j} \setminus \switchUpArcs_{j}} \eta_{\tm,\arc}^{j-1}
      - \sum_{\arc \in \refineArcs_{j} \setminus \switchUpArcs_{j}} \eta_{\tm,\arc}^{j}
     + \sum_{\arc \in \refineArcs_{j} \cap \, \switchUpArcs_{j}} \eta_{\tm,\arc}^{j-1}
      - \sum_{\arc \in \refineArcs_{j} \cap \, \switchUpArcs_{j}} \eta_{\tm,\arc}^{j} \\
    & + \sum_{\arc \in \arcsPipes \setminus (\switchUpArcs_{j} \cup
      \refineArcs_{j})} \eta_{\td,\arc}^{j-1}
      - \sum_{\arc \in \arcsPipes \setminus (\switchUpArcs_{j} \cup
      \refineArcs_{j})} \eta_{\td,\arc}^{j}
     + \sum_{\arc \in \switchUpArcs_{j} \setminus \refineArcs_j} \eta_{\td,\arc}^{j-1}
      - \sum_{\arc \in \switchUpArcs_{j} \setminus \refineArcs_j}
      \eta_{\td,\arc}^{j}
    \\
    & + \sum_{\arc \in \refineArcs_{j} \setminus \switchUpArcs_j} \eta_{\td,\arc}^{j-1}
      - \sum_{\arc \in \refineArcs_{j} \setminus \switchUpArcs_j} \eta_{\td,\arc}^{j}
     + \sum_{\arc \in \refineArcs_{j} \cap \, \switchUpArcs_{j}} \eta_{\td,\arc}^{j-1}
      - \sum_{\arc \in \refineArcs_{j} \cap \, \switchUpArcs_{j}} \eta_{\td,\arc}^{j} \\
    = & \sum_{\arc \in \switchUpArcs_{j}} \eta_{\tm,\arc}^{j-1}
        - \sum_{\arc \in \switchUpArcs_{j}} \eta_{\tm,\arc}^{j}
        + \sum_{\arc \in \refineArcs_{j}} \eta_{\td,\arc}^{j-1}
        - \sum_{\arc \in \refineArcs_{j}} \eta_{\td,\arc}^{j}
    \\
    = & \sum_{\arc \in \switchUpArcs_{j}} (\eta_{\tm,\arc}^{j-1}
        - \eta_{\tm,\arc}^{j})
        + \frac{1}{2} \sum_{\arc \in \refineArcs_{j}} \eta_{\td,\arc}^{j-1},
  \end{align*}
  where we use that
  $\eta_{\tm,\arc}^{j} = \eta_{\tm,\arc}^{j-1}$
  for every $\arc \in \refineArcs_{j} \setminus \switchUpArcs_{j}$
%   the change in the model error
%   estimator~$\eta_{\tm,\arc}$
%   after a grid refinement can be neglected
  since $\eta_{\td,\arc}^{j-1} \ll \eta_{\tm,\arc}^{j-1}$
  for every $\arc \in \arcsPipes$;
  see Lemma~\ref{lem:reliableModelErrorEstimator}.
  Moreover, the discretization error
  estimator~$\eta_{\td,\arc}$
  does not change after a switching up the model level.

  Again, summing up over all $j = 1, \dotsc, \mu$ yields the overall
  error decrease after~$\mu$ for-loop iterations of
  \begin{align*}
    & \sum_{j=1}^\mu \Bigg(\sum_{\arc \in \arcsPipes} \eta_\arc^{j-1}
      - \sum_{\arc \in \arcsPipes} \eta_\arc^{j}\Bigg)
      = \sum_{\arc \in \arcsPipes} \eta_{\arc}^{0}
      - \sum_{\arc \in \arcsPipes} \eta_{\arc}^{\mu} \\
    = & \sum_{j=1}^\mu \Bigg(\sum_{\arc \in \switchUpArcs_j}
        (\eta_{\tm,\arc}^{j-1} - \eta_{\tm,\arc}^j)
        + \frac{1}{2} \sum_{\arc \in \refineArcs_{j}} \eta_{\td,\arc}^{j-1}\Bigg).
  \end{align*}

  With similar arguments as before for Lines~\ref{alg:algorithm:model-up}
  and~\ref{alg:algorithm:refine-grid}
  we consider Lines~\ref{alg:algorithm:model-down}
  and~\ref{alg:algorithm:coarsen-grid} and obtain
  \begin{align*}
    & \sum_{\arc \in \arcsPipes} \eta_\arc^{\mu + 1}
      - \sum_{\arc \in \arcsPipes} \eta_\arc^{\mu}
    \\
    = & \sum_{\arc \in \arcsPipes} \eta_{\td,\arc}^{\mu + 1}
        + \sum_{\arc \in \arcsPipes} \eta_{\tm,\arc}^{\mu + 1}
        - \sum_{\arc \in \arcsPipes} \eta_{\td,\arc}^{\mu}
        - \sum_{\arc \in \arcsPipes} \eta_{\tm,\arc}^{\mu}
    \\
    = & \sum_{\arc \in \coarsenArcs} \eta_{\td,\arc}^{\mu + 1}
        - \sum_{\arc \in \coarsenArcs} \eta_{\td,\arc}^{\mu}
        + \sum_{\arc \in \switchDownArcs} \eta_{\tm,\arc}^{\mu + 1}
        - \sum_{\arc \in \switchDownArcs} \eta_{\tm,\arc}^{\mu}
    \\
    = & \sum_{\arc \in \coarsenArcs} \eta_{\td,\arc}^{\mu}
        + \sum_{\arc \in \switchDownArcs} (\eta_{\tm,\arc}^{\mu + 1}
        -  \eta_{\tm,\arc}^{\mu}).
  \end{align*}
  % \MSil{Here we need again an estimation for the discretization errors
  % after model switching (down) and an estimation for the model error
  % after coarsening the grid}%
  % Again, the change in $\eta_{\tm,\arc}$
  % after a grid coarsening can be neglected,
  % since $\eta_{\td,\arc} \ll \eta_{\tm,\arc}$.
  Finally, it remains to prove that
  \begin{equation*}
    \sum_{j=1}^\mu \Bigg(\sum_{\arc \in \switchUpArcs_j}
      (\eta_{\tm,\arc}^{j-1} - \eta_{\tm,\arc}^j)
      + \frac{1}{2} \sum_{\arc \in \refineArcs_{j}} \eta_{\td,\arc}^{j-1}\Bigg)
    - \sum_{\arc \in \coarsenArcs} \eta_{\td,\arc}^{\mu}
    - \sum_{\arc \in \switchDownArcs} (\eta_{\tm,\arc}^{\mu + 1}
    -  \eta_{\tm,\arc}^{\mu})
  \end{equation*}
  is positive and uniformly bounded away from zero.
  Using the proofs of Lemmas~\ref{lem:discr-lemma} and~\ref{lem:model-lemma}
  we have
  \begin{align*}
    & \sum_{j=1}^\mu \sum_{\arc \in \switchUpArcs_j}
      (\eta_{\tm,\arc}^{j-1} - \eta_{\tm,\arc}^j)
      + \frac{1}{2} \sum_{j=1}^\mu
      \sum_{\arc \in \refineArcs_{j}} \eta_{\td,\arc}^{j-1}
    \\
    > \, & \Theta_\tm \mu \eps
           + \frac{1}{2} \mu \Theta_\td \sum_{\arc \in \arcsPipes}
           \eta_{\td, \arc}^{\mu} \\
    > \, & \tau \Phi_\tm \card{\arcsPipes} \eps + C_2 \eps
           + (\Phi_\td + C_1) \sum_{\arc \in \arcsPipes} \eta_{\td, \arc}^{\mu} \\
    > \, & \sum_{\arc \in \switchDownArcs} (\eta_{\tm,\arc}^{\mu+1}
           - \eta_{\tm,\arc}^\mu) + C_2 \eps
           + \sum_{\arc \in \coarsenArcs} \eta_{\td, \arc}^{\mu}
           + C_1 \card{\arcsPipes} \eps,
  \end{align*}
  which completes the proof.
\end{proof}

%%%%%%%%%%%%%%%%%%%%%%%%%%%%%%%%%%%%%%%%%%%%%%%%%%%%%%%%%%%%%%%%%%%%%%%%%%%
\subsection{Remarks}
\label{sec:remarks}

Before we close this section we discuss some details and extensions
regarding Alg.~\ref{alg:algorithm}.
First, we give an overview of the main computations that are
performed in the algorithm.
In Lines~\ref{alg:solveNLP1} and~\ref{alg:solveNLP2}, the
NLP~\eqref{eq:entire-model-discr} is solved using the
current model level~$\la$ and the current stepsize~$h_\arc$ for every
pipe $\arc \in \arcsPipes$.
Most types of NLP algorithms are iterative methods.
That is, the computational costs of the algorithms depend on the
number of iterations required to converge to a (local) optimal
solution and the costs per iteration.
The latter mainly consist of the solution of a linear system (\eg
suitable forms of the KKT system for interior-point or active-set
methods) for computing the search direction.
The size of this linear system typically is $\mathcal{O}(n+m)$, where
$n$ is the number of variables and $m$ is the number of constraints of
the NLP.
Both $n$ and $m$ are directly controlled by the stepsizes~$h_\arc$
that we use in our NLP models.
The model level~$\la$ mainly determines the sparsity/density of the system matrices of the linear
systems and the overall nonlinearity of the NLP, which typically
influences the number of required iterations.

In Lines~\ref{alg:errorsPipes1} and~\ref{alg:errorsPipes2}, the
overall error estimator $\eta_{\arc}(\sol)$
is computed for every pipe $\arc \in \arcsPipes$.
Thus, for all pipes, the solution of model~($\tD_1$) is computed
with stepsize both~$2h_\arc$ and~$4h_\arc$
and the solution of model ($\Dla$) is computed with stepsize~$h_\arc$.
These solutions are obtained by solving the initial value problems
consisting of the ordinary differential
equations~($\tM_1$) and~($\Mla$) together
with the initial value $\pressure(\spatVar_0)$, which
is contained in the optimal solution~$\sol$ of
Problem~\eqref{eq:entire-model-discr}.
Continuing with the example of the implicit Euler method that we
use as numerical integration scheme throughout this paper, the initial value
problems can be solved (i) by considering the implicit equations
in~($\tD_1$) and~($\Dla$) and using, \eg the Newton method to solve
for~$\pressure_k$ in every space integration step or (ii) by using
an existing software code and setting the order of
the numerical integration scheme to one.

The subset~$\mathcal{R}$ in Line~\ref{alg:algorithm:compute-sets-1}
can be determined efficiently, since $\eta_{\td,\arc}(\sol)$
has already been computed in Line~\ref{alg:errorsPipes1}
or~\ref{alg:errorsPipes2} for every $\arc \in \arcsPipes$.
For subset~$\mathcal{U}$ in Line~\ref{alg:algorithm:compute-sets-1}
and in~\eqref{eq:marking-strategy-up}
the error estimator $\eta_{\tm,\arc}(\sol)$
has also already been computed in Line~\ref{alg:errorsPipes1}
or~\ref{alg:errorsPipes2} for every $\arc \in \arcsPipes$.
Moreover, $\la^\tnew$ in~\eqref{level_switch_up_rule}
has to be computed in order to determine~$\mathcal{U}$.
For this, we compute $\eta_{\tm,\arc}(\sol;\la-1)$
if and only if $\la = 3$.
In the case $\la = 2$ we have $\eta_{\tm,\arc}(\sol;\la-1) = 0$
and for $\la = 1$ we have $\eta_{\tm,\arc}(\sol;\la^\tnew)
= \eta_{\tm,\arc}(\sol;\la) = 0$.
Subset~$\mathcal{C}$ in Line~\ref{alg:algorithm:compute-sets-2}
can also be computed efficiently, since $\eta_{\td,\arc}(\sol)$
has already been computed in Line~\ref{alg:errorsPipes1}
or~\ref{alg:errorsPipes2} for every $\arc \in \arcsPipes$.
For subset~$\mathcal{D}$ in Line~\ref{alg:algorithm:compute-sets-2}
and in~\eqref{eq:marking-strategy-down}
the error estimator $\eta_{\tm,\arc}(\sol)$
has been computed already in Line~\ref{alg:errorsPipes1}
or~\ref{alg:errorsPipes2} for every $\arc \in \arcsPipes$.
If $\la \in \set{1,2}$, then $\eta_{\tm,\arc}(\sol;\la+1)$
has to be computed for every $\arc \in \arcsPipes$
in order to determine~$\mathcal{D}$.

We note that the optimal solution~$\sol$
of Problem~\eqref{eq:entire-model-discr} contains,
among others, the model level~$\la$, stepsize~$h_\arc$, and
pressure~$\pressure^{\la}(\spatVar_0)$ at the
beginning of the pipe, for every $\arc \in \arcsPipes$.
Using $\la$, $h_\arc$, and $\pressure^{\la}(\spatVar_0)$,
the discretization and model error estimator for
pipe $\arc \in \arcsPipes$ can be computed
without information from other pipes.
Hence, the error estimators, \eg in
Line~\ref{alg:errorsPipes2}, can be computed in parallel.

Up to now, we have discussed two types of errors:
modeling and discretization errors.
Both are handled by Alg.~\ref{alg:algorithm} and we have shown that
the algorithm terminates with a combined model and discretization
error that satisfies a user-specified error tolerance~$\eps > 0$.
What we have ignored so far is that the NLPs are also solved by a
numerical method that introduces numerical errors as well.
However, it is easy to integrate the control of this additional error
source into Alg.~\ref{alg:algorithm}.
Let $\eps^\topt > 0$ be the optimality tolerance that we hand
over to the optimization solver and suppose that the solver always
satisfies this tolerance.
Furthermore, let the tolerance~$\eps$ considered so far now be denoted
by $\eps^\tdm$.
Using the triangle inequality we easily see that the upper bound of
the total error (that is aggregated modeling, discretization, and
optimization error) is $\eps^\topt + \eps^\tdm$.
Hence, in order to satisfy an overall error tolerance~$\eps > 0$,
we have to ensure that $\eps^\topt + \eps^\tdm \leq \eps$ holds,
which can be formally introduced in~Alg.~\ref{alg:algorithm} by
replacing $\eps$ with $\eps^\topt + \eps^\tdm$.

Finally, note that this additional error source directly suggests
itself for adaptive treatment as well.
In the early iterations of Alg.~\ref{alg:algorithm} it is not
important that $\eps^\topt$ is small.
That is, the optimization is allowed to produce coarser approximate local
solutions.
However, in the course of the algorithm, one can observe the achieved
modeling and discretization error and can adaptively tighten the
optimization tolerance.
Since this strategy allows the optimization method to produce coarse
approximate solutions in the beginning, it can be expected that this
leads to a speed-up in the overall running times of
Alg.~\ref{alg:algorithm}.

The choice of the error tolerance $\eps$
that has to be provided in Alg.~\ref{alg:algorithm}
will depend on the user requirements,
however, one should be aware that
due to the round-off errors committed during every single step of the procedure,
and due to possible ill-conditioning of the linear systems solved by the NLP solver,
none of the three errors,
the discretization error, the modeling error, and the NLP error
can be chosen extremely small.
Since the backward error and the associated condition number of the linear systems
can be estimated during the procedure, see \cite{GolV96},
and since the error estimates for the discretization method are at hand,
it is just the modeling error which is not known a priori.
To estimate this latter error (of the finest model) usually requires a comparison with experimental data.
If these are available during a real-world process,
then it is possible to adjust the required tolerances~$\eps$
in a feedback loop using a standard PI controller, see, \eg \cite{PetCK91},
i.e., if measured data are available that show that the finest model has a given accuracy,
then~$\eps$ should not be chosen smaller than this.

Finally, we want to stress that the described adaptive error control algorithm
can be used with any number of model levels in the hierarchy,
with any higher order discretization scheme,
and with any number of grid refinement levels.

%%% Local Variables:
%%% mode: latex
%%% TeX-master: "nlp-model-coupling_preprint"
%%% End:

\section{Computational Results}
\label{sec:comp-results}

In this section we present numerical results obtained by the adaptive
error control algorithm.
To this end, we compare the efficiency of the method with an approach
that directly solves an NLP that satisfies the same error tolerance
and that is obtained without using adaptivity.
Before we discuss the results in detail we brief\/ly mention the
computational setup and the gas transport network instances that we
solve.

We implemented the adaptive error control
algorithm~\ref{alg:algorithm} in \codeName{Python~2.7.13} and used the
\codeName{scipy~0.14.0} module for solving the initial value problems.
All nonlinear optimization models have been implemented using the
\codeName{C++} framework
\codeName{LaMaTTO++}\footnote{\url{http://www.mso.math.fau.de/edom/projects/lamatto.html}}
for modeling and solving mixed-integer nonlinear optimization problems on
networks.
The computations have been done on a six-core AMD Opteron\textsuperscript{TM}
Processor 2435 with \SI{2.2}{\giga\hertz} and \SI{64}{\giga\byte} of
main memory.
The NLPs have been solved using \Ipopt~3.12; see
\cite{waechter_biegler:2005,waechter_biegler:2006}.

For our computational study, we choose publicly available
\codeName{GasLib} instances; see~\cite{Schmidt_et_al:2017b}.
This has the advantage that, if desired, all numerical results can be
reproduced on the same data.
In what follows, we consider the networks \codeName{GasLib-40} and
\codeName{GasLib-135}, since these are the largest networks in the
\codeName{GasLib} that only contain pipes and compressor stations as
arc types.
Detailed statistics are given in Table~\ref{tab:network-stats}.
\begin{table}
  \centering
  \caption{Statistics for the instances}
  \label{tab:network-stats}
  \begin{tabular}{ccccc}
    \toprule
    Network & \# nodes & \# pipes & \# compressor stations & total
                                                             pipe
                                                             length
                                                             (km) \\
    \midrule
    \codeName{GasLib-40} & 40 & 39 & 6 & \num[round-mode=places,round-precision=0]{1112.4705743774798}\\
    \codeName{GasLib-135} & 135 & 141 & 29 & \num[round-mode=places,round-precision=0]{6934.585662953983}\\
    \bottomrule
  \end{tabular}
\end{table}

Next, we describe the parameterization of
Alg.~\ref{alg:algorithm}.
We initialize every pipe $\arc \in \arcsPipes$ with the coarsest model
level~$\ell_\arc = 3$ and with the coarsest possible discretization
grid.
In order to yield a well-defined algorithm, the number of
discretization grid intervals has to be a multiple of $4$; \cf
Fig.~\ref{fig:evaluationGrid}.
Thus, we initially set $h_\arc = \length_\arc / 4$
and ensure in Step~\ref{alg:algorithm:coarsen-grid} of
Alg.~\ref{alg:algorithm} that we never obtain a coarser grid size than
the initial one.
The overall tolerance is set to $\eps = 10^{-4}$\,\si{\bar}.
Moreover, we set $\Theta_\td = \Theta_\tm = 0.7$, $\Phi_\td = \Phi_\tm = 0.3$,
$\tau = 1.1$, and $\mu = 4$.
Here, we refrain from updating these parameters from iteration to
iteration, which is possible in general.
Note that our parameter choice violates the second inequality of
Theorem~\ref{thm:convergence}.
This could be fixed by simply increasing the hysteresis
parameter~$\mu$.
However, we refrain from using a larger $\mu$ in order to give the
adaptive algorithm more chances to also switch down in the model
hierarchy or to coarsen discretization grids.
Our numerical experiments show that the violation of the second
inequality of Theorem~\ref{thm:convergence} does not harm convergence
in practice but leads to slightly faster computations.

The same rationale holds for the relation between model and
discretization error as assumed in Theorem~\ref{thm:convergence};
\cf also Lemma~\ref{lem:reliableModelErrorEstimator}.
To be fully compliant with the theory, the initial discretization grids
need to be much finer.
Again, coarser initial discretization grids do not harm convergence
in our numerical experiments but yield much faster computations.

We now turn to the discussion of the numerical results.
Both instances are solved using 8~iterations.
Thus, together with the initially solved NLP,
we have to solve 9~NLPs for solving both instances.

Using the adaptive control algorithm,
it takes
\SI[round-mode=places,round-precision=2]{3.82332675552}{\second} to
solve the \codeName{GasLib-40} instance and
\SI[round-mode=places,round-precision=2]{7.49602046585}{\second} to
solve the \codeName{GasLib-135} instance.
For the \codeName{GasLib-40} network, the final NLP contains
\num{2026}~variables and \num{1988}~constraints, whereas for the
\codeName{GasLib-135} the final NLP contains \num{3405}~variables and
\num{3271}~constraints.

Most interesting is the speed-up that we obtain
by using the adaptive control algorithm.
Thus, we compare the above given solution times with the solution times
for an NLP that satisfies the same error tolerances but that is
obtained without using model level and discretization grid adaptivity.
This NLP contains \num{40034}~variables and \num{39996}~constraints
for the \codeName{GasLib-40} instance and \num{144757}~variables as
well as \num{144623}~constraints for the \codeName{GasLib-135}
instance.
Compared to the final NLPs that have to be solved within the adaptive
algorithm, the NLPs obtained without using adaptivity are quite large
scale.
This directly translates to solution times.
The \codeName{GasLib-40} instance requires \SI{53.11}{\second} and
the \codeName{GasLib-135} instance requires \SI{122.42}{\second}.
Thus, we get a speed-up factor of 13.89 and 16.33, respectively.

Figure~\ref{fig:RU-plot} illustrates the adaptivity of the algorithm by
plotting how many pipe grids are refined ($\card{\mathcal{R}}$) and
how many pipe models are switched up in the hierarchy
($\card{\mathcal{U}}$).
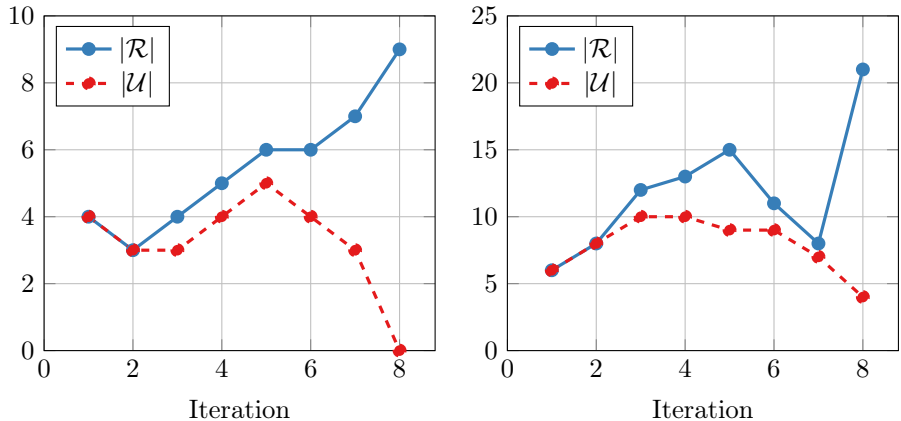
\begin{figure}
  \centering
  \begin{tikzpicture}
  \begin{axis}[%
  scale only axis,
  scale=0.61,
  ymin=0, ymax=10,
  xmin=0,
  xlabel={Iteration},
  legend pos=north west,
  grid]
    \addplot[solid, very thick, mark=*, color=mycol1] table [x=iter, y=R]{tikz-imgs/RU-plot-gaslib-40.dat};
    \addlegendentry{$\card{\mathcal{R}}$}
    \addplot[dashed, very thick, mark=*, color=mycol2] table [x=iter, y=U]{tikz-imgs/RU-plot-gaslib-40.dat};
    \addlegendentry{$\card{\mathcal{U}}$}
  \end{axis}
\end{tikzpicture}%
%
%
%%% Local Variables:
%%% mode: latex
%%% TeX-master: "../nlp-model-coupling_preprint"
%%% End:\quad%
  \begin{tikzpicture}
  \begin{axis}[%
  scale only axis,
  scale=0.61,
  ymin=0, ymax=25,
  ytick={0,5,10,15,20,25},
  xmin=0,
  xlabel={Iteration},
  legend pos=north west,
  grid]
    \addplot[solid, very thick, mark=*, color=mycol1] table [x=iter, y=R]{tikz-imgs/RU-plot-gaslib-135.dat};
    \addlegendentry{$\card{\mathcal{R}}$}
    \addplot[dashed, very thick, mark=*, color=mycol2] table [x=iter, y=U]{tikz-imgs/RU-plot-gaslib-135.dat};
    \addlegendentry{$\card{\mathcal{U}}$}
  \end{axis}
\end{tikzpicture}%
%
%
%%% Local Variables:
%%% mode: latex
%%% TeX-master: "../nlp-model-coupling_preprint"
%%% End:
  \caption{Number of pipes with refined grid ($y$-axis; $\card{\mathcal{R}}$)
    and number of pipes where the model is switched up in the model hierarchy ($y$-axis;
    $\card{\mathcal{U}}$) over the course of the iterations ($x$-axis).
    Left: \codeName{GasLib-40}, right: \codeName{GasLib-135}.}
  \label{fig:RU-plot}
\end{figure}
It can be clearly seen that increasing the accuracy is only needed for
a small fraction of the pipes.
For the \codeName{GasLib-40} network, we never refine grids for more
than 9~pipes, whereas we never refine grids for more than 21 pipes for
the \codeName{GasLib-135} network.
Thus, for the larger network, we never refine grids for more than
\SI{15}{\percent} of all pipes.

For both networks, the Lines~\ref{alg:algorithm:model-down} and
\ref{alg:algorithm:coarsen-grid} are only reached once.
For the smaller network, only 1 pipe grid is coarsened, whereas 3 pipe grids
are coarsened for the larger network.
Moreover, the algorithm never switches down in the model hierarchy.
Consequently, the NLPs get larger from iteration to iteration.
This then yields increased running times for the NLP solver as
depicted in Fig.~\ref{fig:sol-times}.
\begin{figure}
  \centering
  \begin{tikzpicture}
  \begin{axis}[%
    scale only axis,
    scale=0.56,
    legend pos=north west,
    ymode=log,
    ymax=10,
    ylabel={CPU time [s]},
    ylabel shift={-0.2cm},
    xlabel={Iteration},
    xmin=0,
    grid]
    \addplot[dotted, very thick, mark=*, color=mycol1] table [x=iter, y=t1]{tikz-imgs/sol-times-gaslib-40.dat};
    \addlegendentry{NLPs}
    \addplot[dashed, very thick, mark=*, color=mycol2] table [x=iter, y=t2]{tikz-imgs/sol-times-gaslib-40.dat};
    \addlegendentry{IVPs}
  \end{axis}
\end{tikzpicture}%
%
%
%%% Local Variables:
%%% mode: latex
%%% TeX-master: "../nlp-model-coupling_preprint"
%%% End:\quad%
  \begin{tikzpicture}
  \begin{axis}[%
    scale only axis,
    scale=0.56,
    legend pos=north west,
    ymode=log,
    ymin=1e-1, ymax=1e1,
    ytick={1e-1,1,1e1},
    xlabel={Iteration},
    xmin=0,
    grid]
    \addplot[dotted, very thick, mark=*, color=mycol1] table [x=iter, y=t1]{tikz-imgs/sol-times-gaslib-135.dat};
    \addlegendentry{NLPs}
    \addplot[dashed, very thick, mark=*, color=mycol2] table [x=iter, y=t2]{tikz-imgs/sol-times-gaslib-135.dat};
    \addlegendentry{IVPs}
  \end{axis}
\end{tikzpicture}%
%
%
%%% Local Variables:
%%% mode: latex
%%% TeX-master: "../nlp-model-coupling_preprint"
%%% End:
  \caption{Aggregated run times ($y$-axis; in s) required for solving the
    nonlinear optimization problems (NLPs) and the initial value problems (IVPs)
    for the computation of the error estimates.
    Left: \codeName{GasLib-40}, right: \codeName{GasLib-135}.}
  \label{fig:sol-times}
\end{figure}
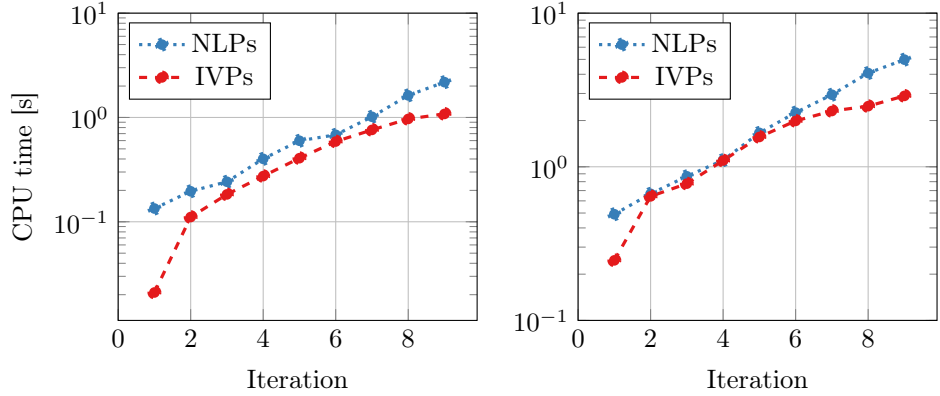
It can be seen that the subsequent NLPs can be solved quite fast.
There are two main reasons for this phenomenon.
First, the NLP's size only increases moderately due to the adaptive
control strategy.
Second, the overall algorithm allows for warm-starting:
When solving a single NLP we always use the last NLP's solution to
set up the initial iterate.

Lastly, we consider the decrease in the respective errors.
In Fig.~\ref{fig:errors}, the discretization, model, and total errors
are plotted over the course of the iterations.
Both profiles show the expected decrease in the errors.
\begin{figure}
  \centering
  \begin{tikzpicture}
  \begin{axis}[%
    scale only axis,
    scale=0.56,
    ymode=log,
    ymin=1e-5, %ymax=3e-2,
    ylabel={Error estimate},
    xlabel={Iteration},
    xmin=0,
    grid]
    \addplot[dotted, very thick, mark=*, color=mycol1] table [x=iter, y expr=\thisrow{eta_d}*1e-5]{tikz-imgs/errors-gaslib-40.dat};
    \addlegendentry{$\eta_\td$}
    \addplot[dashed, very thick, mark=*, color=mycol2] table [x=iter, y expr=\thisrow{eta_m}*1e-5]{tikz-imgs/errors-gaslib-40.dat};
    \addlegendentry{$\eta_\tm$}
    \addplot[solid, very thick, mark=*, color=mycol3] table [x=iter, y expr=\thisrow{eta}*1e-5]{tikz-imgs/errors-gaslib-40.dat};
    \addlegendentry{$\eta$}
  \end{axis}
\end{tikzpicture}%
%
%
%%% Local Variables:
%%% mode: latex
%%% TeX-master: "../nlp-model-coupling_preprint"
%%% End:\quad%
  \begin{tikzpicture}
  \begin{axis}[%
    scale only axis,
    scale=0.56,
    ymode=log,
    ymin=1e-5, %ymax=3e-2,
    xlabel={Iteration},
    xmin=0,
    grid]
    \addplot[dotted, very thick, mark=*, color=mycol1] table [x=iter, y expr=\thisrow{eta_d}*1e-5]{tikz-imgs/errors-gaslib-135.dat};
    \addlegendentry{$\eta_\td$}
    \addplot[dashed, very thick, mark=*, color=mycol2] table [x=iter, y expr=\thisrow{eta_m}*1e-5]{tikz-imgs/errors-gaslib-135.dat};
    \addlegendentry{$\eta_\tm$}
    \addplot[solid, very thick, mark=*, color=mycol3] table [x=iter, y expr=\thisrow{eta}*1e-5]{tikz-imgs/errors-gaslib-135.dat};
    \addlegendentry{$\eta$}
  \end{axis}
\end{tikzpicture}%
%
%
%%% Local Variables:
%%% mode: latex
%%% TeX-master: "../nlp-model-coupling_preprint"
%%% End:
  \caption{Discretization, model and total error estimates ($y$-axis) over the course of
    the iterations ($x$-axis). Left: \codeName{GasLib-40}, right:
    \codeName{GasLib-135}.}
  \label{fig:errors}
\end{figure}
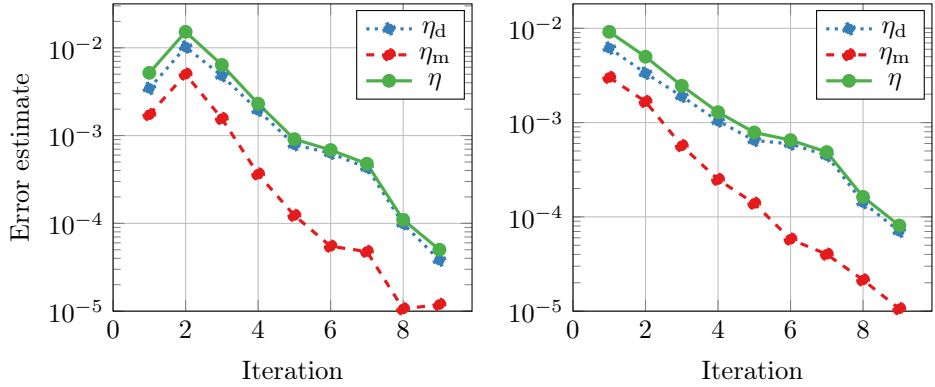

%%% Local Variables:
%%% mode: latex
%%% TeX-master: "nlp-model-coupling_preprint"
%%% End:

\section{Conclusion}
\label{sec:conclusion}

We have considered the problem of operation cost
minimization for gas transport networks.
In this context, we have focused on stationary and isothermal models
and developed an adaptive model and discretization error control
algorithm for nonlinear optimization that uses a hierarchy of
continuous and finite-dimensional models.
Out of this hierarchy, the new method
adaptively chooses different models in order to finally achieve an
optimal solution that satisfies a prescribed combined model and
discretization error tolerance.
The algorithm is shown to be convergent and its performance is
illustrated by several numerical results.

The results pave the way for future work in the context of
model switching and discretization grid adaptation for nonlinear optimal
control. On the one hand, it should be extended to non-isothermal
and instationary models of gas transport, in particular, in a
port-Hamiltonian formulation. On the other hand, it would be
interesting to extend the new technique to mixed-integer nonlinear
optimal control.

%%% Local Variables:
%%% mode: latex
%%% TeX-master: "nlp-model-coupling_preprint"
%%% End:

\section*{Acknowledgements}
\label{sec:acknowledgements}

This research has been performed as part of the Energie Campus
Nürnberg and is supported by funding of the Bavarian State Government.
The authors acknowledge funding through the DFG Transregio TRR~154,
subprojects B03 and B08.

%%% Local Variables:
%%% mode: latex
%%% TeX-master: "nlp-model-coupling_preprint"
%%% End:

\bibliographystyle{plain}
\bibliography{nlp-model-coupling}

\end{document}

%%% Local Variables:
%%% mode: latex
%%% TeX-master: t
%%% End: